\DeclareMathOperator{\crit}{crit}
\DeclareMathOperator{\Rp}{Re}
\DeclareMathOperator{\Cp}{Im}
\DeclareMathOperator{\mcg}{MCG}
\DeclareMathOperator{\Kr}{Ker}
\newcommand{\co}{\colon\thinspace}
\newcommand{\C}{\mathbb{C}}
\newcommand{\R}{\mathbb{R}}
\newcommand{\Z}{\mathbb{Z}}
\newcommand{\MYhref}[3][blue]{\href{#2}{\color{#1}{#3}}}
\theoremstyle{definition}
\newtheorem{thm}{Theorem}[section]
\newtheorem{lemma}[thm]{Lemma}
\newtheorem{ex}[thm]{Example}
\newtheorem{rmk}[thm]{Remark}
\newtheorem{prop}[thm]{Proposition}
\newtheorem{df}[thm]{Definition}
\title{Existence of two-parameter crossings, with applications}
\author[Jonathan D. Williams]{Jonathan D. Williams} 
\address{Department of Mathematical Sciences, Binghamton University}
\email{\MYhref{mailto:jdw.math@gmail.com}{jdw.math@gmail.com}}
\begin{document}
\begin{abstract}A Morse 2-function is a generic smooth map $f$ from a manifold $M$ of arbitrary finite dimension to a surface $B$. Its critical set maps to an immersed collection of cusped arcs in $B$. The aim of this paper is to explain exactly when it is possible to move these arcs around in $B$ by a homotopy of $f$ and to give a library of examples when $M$ is a closed 4-manifold. The last two sections give applications to the theory of crown diagrams of smooth 4-manifolds.\end{abstract}
\maketitle
\tableofcontents
\section{Introduction}\label{introduction}The theory of singular fibrations of $n$-manifolds for $n\geq3$ has many applications in low-dimensional topology. For instance Morse functions, open books and Lefschetz fibrations all lead to depictions of 3- and 4-manifolds using surfaces (which are regular fibers) decorated with circles, and such discrete information has been a rich playground. Though its definitions go at least as far back as catastrophe theory, the Morse 2-function as a fibration structure is a promising invention of \cite{ADK}, with recent development such as \cite{AK,B1,B2,Be,BH,GK3,H,Le,W1,W2}. The image of the critical set of a Morse 2-function is an immersed collection of cusped 1-submanifolds, and the main goal of this paper is to explain when it is possible to move the critical image around by a homotopy of the fibration map (Theorem \ref{isotthm}). Some of the results from Section \ref{thmandapps} are entirely straightforward; however, particular applications of Theorem \ref{isotthm} appearing in Proposition \ref{r2prop} are significant for Morse 2-function experts, clearing up a well-known problem known since at least 2008 \cite{GK2} concerning subtle complications around movements that resemble Reidemeister-II moves between fold arcs. We use this knowledge in the proof of Theorem \ref{T} below. Aside from these considerations, and from being a crucial reference of \cite{W2}, the broader and perhaps more important goal of Section \ref{thmandapps} is to give a unified and reasonably universal reference for those who wish to modify Morse 2-functions in the course of constructing smooth 4-manifolds.

The subsequent sections give applications of the ideas from Section \ref{thmandapps}. Section \ref{simpsection} presents an algorithm for converting an arbitrary indefinite Morse 2-function into one that is \emph{simplified} in the sense of \cite{BS}, with a significantly shorter proof. Section 4 has refinements of the main result of \cite{W2}, which is a uniqueness result for crown diagrams of smooth 4-manifolds; see Definition \ref{sddef} (in \cite{W2} these were called \emph{surface diagrams}). One of these refinements states that if two crown diagrams come from homotopic fibration maps and have the same genus, then one may be transformed into the other using genus-preserving moves. This is in contrast to the corresponding statement for pairs of Heegaard diagrams of 3-manifolds, which in general require stabilizations, and it simplifies any effort to define a diffeomorphism invariant calculated using crown diagrams.
\begin{df}An \emph{indefinite Morse 2-function} is a smooth map $f$ from an  $n$-manifold $M$ to a 2-manifold $B$ such that each critical point has the local model of an index $k$ \emph{indefinite fold} or an index $k$ \emph{indefinite cusp}, given by
\[\begin{array}{lc}(x_1,\ldots,x_n)\mapsto\left(x_1,-\sum\limits_{i=2}^{k+1}x_i^2+\sum\limits_{i=k+2}^nx_i^2\right) &\text{(fold)}\\
(x_1,\ldots,x_n)\mapsto\left(x_1,x_2^3-3x_1x_2-\sum\limits_{i=3}^{k+2}x_i^2+\sum\limits_{i=k+3}^nx_i^2\right)\ \ \ \ \ \ \ \ &\text{(cusp)}\end{array}\]
for $k\in\{1,\ldots,n-2\}$. In this paper, a \emph{deformation} of a Morse 2-function is a homotopy of maps from a smooth, closed oriented 4-manifold $M$ to a smooth, closed oriented surface $B$ given by a sequence of merges, births, flips and 2-parameter crossings as described in \cite{W2}.\end{df}
The local models for $k=0$ and $k=n-1$ (the \emph{definite} fold and cusp) are omitted, though Theorem ~\ref{isotthm} could be easily adapted to handle those critical points as well. The reason for this is that the definite locus can be eliminated from smooth maps when $n=4$ by homotopy (see for example the earliest proof of this fact in \cite{S}), and that there are general existence results for indefinite one-parameter families of maps connecting any pair of homotopic indefinite Morse 2-functions when $n\geq4$ \cite{W2,GK3}.
\begin{figure}
    \centering
    \begin{subfigure}{0.3\textwidth}
        \includegraphics{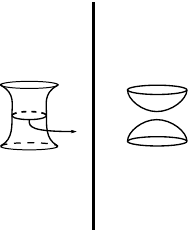}
        \caption{Fold.}
        \label{fold}
    \end{subfigure}
    \qquad\qquad 
    \begin{subfigure}{0.3\textwidth}
        \includegraphics{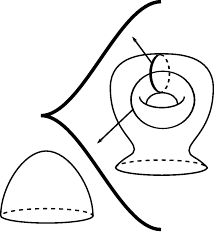}
        \caption{Cusp.}
        \label{cusp}
    \end{subfigure}
    \caption{Critical points of indefinite Morse 2-functions.}\label{crit}
\end{figure}
To help understand folds and cusps, Figure~\ref{crit} shows two disks in $B$, schematically depicting the fibration structure near a fold and a cusp for $n=4$. Bold arcs represent the image of the critical locus and a surface is pictured in the region of regular values that have that surface as their preimage. Tracing point preimages above a horizontal arc from left to right gives the foliation of $\R^3$ by hyperboloids, with a double cone above the fold point. The circle (drawn on the cylinder to the left) that shrinks to the cone point is called the \emph{vanishing cycle} for that critical arc. Each cusp point is a common endpoint of two open arcs of fold points for which the two vanishing cycles must transversely intersect at a unique point in the fiber. See \cite{GK3} for a more detailed background on these maps. 
\begin{figure}[h]
	\centering{\includegraphics{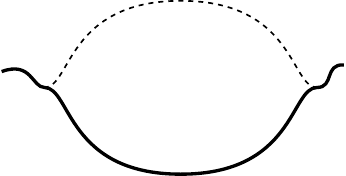}}
	\caption{A fold arc (in bold) runs along the boundary of a disk $\Delta$ in the target space of $f_0$ in preparation to move across $\Delta$ by some deformation of $f_0$, eventually to coincide with the dotted line (other critical points omitted).}\label{isot}
\end{figure}

Section \ref{thmandapps} gives the complete characterization of when a proposed movement in the critical image of an indefinite Morse 2-function, similar to a sequence of Reidemeister moves for knot projections, can be achieved by a deformation. Such homotopies are sequences of \emph{2-parameter crossings}, a term that appeared in \cite[Section~2]{GK3}. In their paper and in this paper, it replaces the less precise term \emph{isotopy} that appeared in \cite{Le} and \cite[Section~2.4.1]{W1}, which in those papers was a sequence of 2-parameter crossings. To describe the general situation, suppose there is a Morse 2-function $f_0\co~M^4\rightarrow~D^2$ and an embedded disk \[\Delta\co\{z\in\C:|z|\leq1\}\rightarrow~D^2\] smoothly embedded except at the two corners on either side, such that, for some fold arc $\varphi_0\subset M_0$, the restriction \[f_0|_{\varphi_0}\co~\varphi_0\rightarrow\Delta\left(\{|z|=1,~\Cp z\leq0\}\right)\] is a diffeomorphism (see Figure~\ref{isot}; sometimes $\Delta$ also denotes its image). It is usually important to keep track of the $t$ parameter when discussing deformations, and in this paper subscripts are reserved for it unless stated otherwise, so for example $\varphi_0$ denotes a critical arc in the initial map $f_0$ of a deformation, and $M_I$ denotes $I\times M$, where $I=[0,1]$. The problem is to determine when there is a deformation $f_t\co M_I\rightarrow D^2_I$ such that the image of each point $\Delta(z)\in~f_0(\varphi_0)$ in this fold arc follows the path $\Delta(t\overline{z}+(1-t)z)$ while the rest of the critical image remains fixed. Theorem~\ref{isotthm} gives a necessary and sufficient criterion for this to occur, and the following propositions present a few standard cases in which a proposed 2-parameter crossing always exists. 

\subsubsection*{Acknowledgments}The author would like to thank Denis Auroux, David Gay and Yank{\i} Lekili for helpful conversations during the preparation of this work.
\section{Existence of 2-parameter crossings}\label{thmandapps}
Consider a Morse function $f\co M^n\to[0,4]$ with two isolated critical points $p_1$ and $p_2$, with $f(p_i)=i$, and equip $M$ with a Riemannian metric. If the ascending manifold $Z$ of $p_1$ is disjoint from the descending manifold $Y$ of $p_2$ (where critical points are considered part of their ascending and descending manifolds), then there is a neighborhood $\nu Z$ on which the restriction of $f$ has exactly one critical point $p_1$, and a homotopy of $f$ supported on $\nu Z$ that sends $f(p_1)$ to $3$. In Figure~\ref{isot}, there is a one-parameter family of smooth functions, finitely many of which may not be Morse, and a one-parameter family $Z$ of ascending manifolds for the one-parameter family of Morse critical points given by $\varphi_0$. If $Z$ is disjoint from the one-parameter family of descending manifolds coming from all other critical points in $\Delta$, then the proposed homotopy exists over every vertical arc $\{\Rp z=const\}$, allowing the fold arc to pass over $\Delta$. Here follows a more precise statement using the language of \cite[Section~2.3]{BH}. To informally explain the notation in Definition \ref{dsjtcond}, given a smooth embedded path $\gamma_p$ in $\Delta$ starting at a point $p$ and a horizontal distribution $\mathcal{H}$ for $f$, $V_0^\mathcal{H}(\gamma_p)$ is the subset of the fiber $f^{-1}(p)$ consisting of points that flow to a critical point of $f$. The vanishing cycles in Figure \ref{crit} are examples. Other examples include a pair of points, one in each disk in a fiber $D^2\sqcup D^2$ in Figure \ref{fold} and a pair of points in the disk fiber in Figure \ref{cusp}, all of which flow to the unique critical arc in each figure. For this reason, the letter $V$ stands for \emph{vanishing set} in the following definition.

\begin{df}[Disjointness condition]\label{dsjtcond}Let $\mathcal{H}$ denote a horizontal distribution for the Morse 2-function $f_0$ over a neighborhood of $\Delta$, obtained by taking the orthogonal complement $\ker(df_0)^\perp$ with respect to some Riemannian metric on $f_0^{-1}(\Delta)$. Consider the union over all regular values $p$ of $f_0$, 
\[V=\bigcup\limits_p V_0^\mathcal{H}(\gamma^{up}_p)\cup V_0^\mathcal{H}(\gamma^{down}_p),\] 
where the smooth embeddings $\gamma^{up}_p,\gamma^{down}_p\co[0,1]\to\Delta$ travel from $p$ vertically up or down, respectively, to the two boundary arcs of $\Delta$. Let $Z$ be the points in  $V$ that run into $\varphi_0$ and let $Y$ be the points in $V$ that run into $\crit(f_0)\setminus \varphi_0$. The disjointness condition is that there exists an $\mathcal{H}$ such that $Y$ and $Z$ are bounded away from each other in the preimage of $\Delta$.\end{df}
In this definition, it is acceptable for the terminal point of $\gamma^{down}_p$ to be a critical value in $f(\varphi_0)$ because we only consider the vanishing set $V_0$ of points in the regular fiber $f_0^{-1}(p)$ that run into critical points (not the ones that emerge from critical points). Similarly, there may be arcs $\gamma^{up}_p$ whose terminal point is a critical value. Vanishing sets appear in the definition instead of ascending and descending manifolds because they are defined. For example, suppose $\gamma^{up}_p$ is tangent to the image of a fold arc. In that case, the restriction of $f_0$ to the preimage of $\gamma^{up}_p$ is not a Morse function, so it is not appropriate to speak of the descending manifold of the critical point of $f_0$ above that tangency. On the other hand, the vanishing set is defined in \cite{BH} and has the same properties related to pushing around critical values. Finally, the general process of depicting a vanishing cycle in $f^{-1}(p)$ using a reference path and horizontal distribution will be called \emph{measuring a vanishing cycle}.

\begin{rmk}\label{deffromH}A disk $\Delta$ and a horizontal distribution $\mathcal{H}$ satisfying the disjointness condition over $\Delta$ can be used to specify a deformation suggested by $\Delta$. Let $N$ be a neighborhood of the vanishing set of $\varphi_0$ over a neighborhood $\nu\Delta$ of $\Delta$ (note $N$ is a ball, as discussed in \cite[Section~2.3.1]{W1}). Choose local coordinates near $N$ so that $f_0|_N$ is given by the local model for folds. Then $x_1$ parameterizes $\varphi_0$, and the level sets for suitably parameterized vertical arcs foliating $\nu\Delta$ are the level sets of $-\sum\limits_{i=2}^{k+1}x_i^2+\sum\limits_{i=k+2}^nx_i^2$. Such local coordinates exist because $N$ consists of regular points, with the exception of $\varphi_0$, by the disjointness condition. Now, restricting to $N$, Figure~\ref{isot} is merely the image of the local model for folds, and there is certainly a deformation of the local model $N\to\nu\Delta$, fixing the map near $\partial\overline{N}$, realizing the suggested movement in $S^2$. Call it $f^N_t$, $t\in[0,1]$. Then the deformation is given by \[f_t(x)=\begin{cases}f^N_t(x)&x\in N\\f_0(x)&x\notin N.\end{cases}\]\end{rmk}

\begin{rmk}\label{vertarcs}One further observation is that $Z\cap Y$ always maps to a family of vertical arcs connecting the two sides of $\Delta$, because the union of paths traced by any point in $f_0^{-1}(p)$ above $\gamma_p^{up}$ and its reverse coincides with that of $\gamma_p^{down}$, and its reverse.\end{rmk}

\begin{thm}\label{isotthm}The disjointness condition of Definition~\ref{dsjtcond} is equivalent to the existence of a deformation as suggested by Figure~\ref{isot}.\end{thm}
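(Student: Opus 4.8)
The plan is to establish the two directions separately. The implication ``disjointness $\Rightarrow$ deformation'' is the content of Remark~\ref{deffromH}: from a horizontal distribution $\mathcal{H}$ realizing the disjointness condition one shrinks toward $Z$ to obtain a ball $N$ (a ball by~\cite[Section~2.3.1]{W1}) neighboring the vanishing set of $\varphi_0$ over a slightly enlarged disk $\nu\Delta$, which is disjoint from $\crit(f_0)\setminus\varphi_0$ because $Y$ and $Z$ are a positive distance apart; on $N$ the map $f_0$ is a fold local model, and the movement of Figure~\ref{isot} is realized by an explicit deformation of that model fixing a neighborhood of $\partial\overline{N}$ and extended by $f_0$ off $N$. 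I would only add the observation that in the fold model the region swept by the moving fold over $\nu\Delta$ is exactly the vanishing set of $\varphi_0$, so it never exits $N$ and the rest of the critical image genuinely stays put.

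For the converse I would foliate $\nu\Delta$ by the vertical arcs $\gamma_c$ of Definition~\ref{dsjtcond} and reduce, one arc at a time, to the handle-reordering discussion that opens this section. For all but finitely many $c$ the arc $\gamma_c$ is transverse throughout the deformation to every critical arc, and then the given deformation restricts over $\gamma_c$ to a deformation of the function $f_0|_{f_0^{-1}(\gamma_c)}$ on a $3$-manifold with boundary, in which the critical value of $\varphi_0$ travels monotonically from one boundary end of $\gamma_c$ to the other, past the fixed critical values of the other critical points, with auxiliary births and deaths possibly occurring at isolated times. Since the final map is the reordered one, the classical reordering lemma in converse form --- unaffected by the auxiliary births and deaths, which do not change the final handle decomposition --- forces the part of the vanishing set over $\gamma_c$ that runs into $\varphi_0$ to be isotopic off the part that runs into the other critical points. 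I would carry out this step with the vanishing-set formalism of~\cite[Section~2.3]{BH} rather than ascending and descending manifolds, so that it survives the isolated non-Morse times and, applied over a thin strip, also handles the finitely many exceptional $c$ where $\gamma_c$ is tangent to or crosses a critical arc. Realizing the resulting isotopy by a choice of horizontal distribution over that strip makes the slices of $Z$ and $Y$ over it disjoint.

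Finally I would patch: cover $\nu\Delta$ by finitely many thin vertical strips carrying such distributions and interpolate by a partition of unity. Because ``$Y$ bounded away from $Z$'' is an open condition, and over a sufficiently thin strip the sets $Z$ and $Y$ vary continuously while the compact subsets of $f_0^{-1}(\overline{\nu\Delta})$ they meet stay a definite distance apart, the interpolated $\mathcal{H}$ still separates $Y$ from $Z$; compactness of $f_0^{-1}(\overline{\Delta})$ then upgrades ``disjoint'' to ``bounded away.''

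I expect the main obstacle to be the core of the converse: turning ``a deformation exists'' into ``a distribution exists'' uniformly over the foliation, including the vertical arcs over which the given deformation is non-Morse or wrinkles. This is precisely where one must use the vanishing-set machinery of~\cite[Section~2.3]{BH} in place of gradient-flow manifolds, and where one uses that the reordering obstruction is the isotopy class of one vanishing set relative to the others --- a ``binary'' datum insensitive to births and deaths. By comparison, the gluing step and the direction supplied by Remark~\ref{deffromH} should be routine.
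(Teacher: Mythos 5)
Your proposal is correct and follows essentially the same route as the paper: the paper's proof is a one-sentence reduction of the disjointness condition to the classical critical-value reordering criterion applied simultaneously over the vertical arcs foliating $\Delta$, with the constructive direction already supplied by Remark~\ref{deffromH}. You simply carry out both directions of that reduction in detail (including the patching over exceptional arcs and the upgrade from disjointness to being bounded apart), which the paper leaves implicit.
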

\begin{proof}The disjointness condition is merely a restatement of the requirement that the relevant critical sets of the restrictions of $f_0$ to the vertical arcs in $\Delta$ simultaneously satisfy the same disjointness condition for switching critical points of smooth real-valued functions.\end{proof}
Here follow some examples of how to use Theorem~\ref{isotthm} to verify a proposed movement of critical arcs in the base when $n=4$. Note that indefinite folds in this case have a higher-genus and lower-genus side because the regular fibers of such maps are closed, orientable surfaces. Existence results for various types of what are called \emph{Reidemeister-2 fold crossings} in \cite{GK3} and \cite{W2} are collected into the following proposition.
\begin{prop}[Existence of Reidemeister-2 fold crossings]\label{r2prop}\  \begin{enumerate}
\item\hypertarget{r2prop(1)} There exists a deformation as suggested in Figure~\ref{finger} if one of the following two conditions occur.\begin{enumerate}
	\item[(\hyperlink{r2propitem1aproof}{a})]\hypertarget{r2propitem1a} The point $p$ is on the lower-genus side of at least one of the pictured fold arcs.
	\item[(\hyperlink{r2propitem1bproof}{b})]\hypertarget{r2propitem1b} The point $p$ is on the higher-genus side of both fold arcs, and there exist embedded reference paths from $p$ to each fold arc within $\Delta$ for which the measured vanishing cycles in $f_0^{-1}(p)$ are disjoint.\end{enumerate}
\item[(\hyperlink{r2propitem2proof}{2})]\hypertarget{r2prop(2)}In Figure~\ref{r2propfig}, suppose the fiber genus of each fiber component above $p$ is at least 2, and that $p$ lies on the higher genus side of exactly one of the fold arcs, say $\varphi_0$. Then the suggested $R_2$ deformation exists if  and only if the vanishing cycles at $\Delta(\pm1)$ match, as measured with reference paths from $\Delta(i)$.
\item[(\hyperlink{r2propitem3proof}{3})]\hypertarget{r2prop(3)}There always exists an $R_2$ deformation as suggested by Figure \ref{r2propfig} when $p$ is on the lower-genus side of both critical arcs, even when those critical arcs contain cusp points.
\item[(\hyperlink{r2propitem4proof}{4})]\hypertarget{r2prop(4)}In Figure~\ref{r2propfig}, suppose $p$ lies on the higher-genus side of both fold arcs, all regular fibers are connected, and the fiber above $p$ has genus at least 2. Then there exists an $R_2$ deformation as suggested by the figure.
\end{enumerate}\end{prop}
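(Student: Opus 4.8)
The plan is to reduce all four parts to Theorem~\ref{isotthm}: for each proposed $R_2$ deformation it suffices to produce a horizontal distribution $\mathcal{H}$ realizing the disjointness condition of Definition~\ref{dsjtcond}. Before splitting into cases I would record the fiberwise shape of the vanishing sets. Along a vertical reference arc from a regular value $p$ to a fold arc $\varphi$, the restriction of $f_0$ to the relevant $3$-manifold is, after an arbitrarily small perturbation, a Morse function with a single critical point coming from $\varphi$, and that point has index $1$ or index $2$ according to whether the fiber genus drops or rises as one crosses $\varphi$ toward $p$; hence, writing $Z_p=Z\cap f_0^{-1}(p)$ and $Y_p=Y\cap f_0^{-1}(p)$, the set $Z_p$ is an embedded circle — the vanishing cycle of $\varphi_0$ measured along that arc — when $p$ is on the higher-genus side of $\varphi_0$, and an embedded pair of points when $p$ is on the lower-genus side, and likewise for the contribution of each other fold arc to $Y_p$. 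By Remark~\ref{vertarcs} the set $Z\cap Y$ is a union of complete vertical arcs, so the disjointness condition amounts to arranging, simultaneously for all $p\in\Delta$ and for some $\mathcal{H}$, that $Z_p$ be disjoint from $Y_p$; and since $\Delta$ is a disk, a reference arc inside $\Delta$ from $p$ to a given fold arc is unique up to isotopy through regular values, so $Z_p$ and $Y_p$ are well defined up to isotopy in $f_0^{-1}(p)$.

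With this in hand, part (1)(a) follows from the observation that if $p$ lies on the lower-genus side of one of the two fold arcs in Figure~\ref{finger}, then one of $Z_p,Y_p$ is a pair of points; a pair of points in a surface can be isotoped off any embedded $1$-complex, and because $\Delta$ is contractible this can be done continuously in $p$, so realizing the resulting configuration by a perturbation of $\mathcal{H}$ supported near that fold arc gives the disjointness condition. Part (1)(b) is immediate once the fiberwise picture is set up: its hypothesis says precisely that, for suitable reference arcs, the two measured vanishing cycles already admit disjoint representatives, and one then chooses $\mathcal{H}$ realizing such representatives over all of $\Delta$, again patching over the disk. Part (3) is the simplest case: both $Z_p$ and $Y_p$ are pairs of points, so four points in a surface, trivially made disjoint; a cusp on the boundary of the lune merely replaces, along finitely many vertical arcs, a pair of Morse critical points by a birth/death, but the vanishing set is still defined there and still zero-dimensional (as noted after Definition~\ref{dsjtcond}), so nothing in the argument changes.

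The delicate statement is the ``only if'' direction of part (2), and I expect it to be the main obstacle. Here $Z_p$ is a circle $c_0(p)$ while $Y_p$ is a pair of points, and, in contrast with part (1)(a), that pair of points is not free to move: the corners $\Delta(\pm1)$ are transverse crossings of $\varphi_0$ and the other fold arc where the vanishing data is pinned, so as $p$ travels from $\Delta(1)$ to $\Delta(-1)$ through $\Delta(i)$ the circle $c_0(p)$ is forced to interpolate between the cycles $c_0(\Delta(1))$ and $c_0(\Delta(-1))$ transported to $f_0^{-1}(\Delta(i))$. I would show that the pair of points can be kept off the family $\{c_0(p)\}$ for some $\mathcal{H}$ exactly when these two transported cycles are isotopic: when they agree one builds the requisite family of configurations directly, and when they disagree a relative intersection-number argument — using that the fiber genus is at least $2$, so that the count of intersections of the interpolating family of circles with a point-arc of $Y$ is a genuine and complete obstruction — shows every $\mathcal{H}$ produces an essential intersection, whence Theorem~\ref{isotthm} forbids the deformation. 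Promoting this concrete obstruction to a statement valid for all $\mathcal{H}$, and verifying its completeness, is the technical heart.

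Finally, part (4) I would deduce from part (1)(b): under its hypotheses both $Z_p$ and $Y_p$ are circles, and it remains to see that their isotopy classes admit disjoint representatives. Near a corner of the lune that is a transverse crossing the two vanishing cycles already lie in disjoint neighborhoods of $f_0^{-1}(p)$, since there the two fold models occupy disjoint regions of $M$, and transporting across the disk (possible since $\Delta$ is contractible) keeps them disjoint; the connectedness of all regular fibers forces both cycles non-separating, and the genus at least $2$ hypothesis supplies the room needed to dispose of the remaining case of a cusp corner. Thus the hypothesis of part (1)(b) is met and the $R_2$ deformation exists; handling the cusp-corner case cleanly is the one point in part (4) that requires genuine care beyond invoking part (1)(b).
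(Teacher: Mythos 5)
Your overall framework (reduce everything to Theorem~\ref{isotthm}, analyze the fiberwise vanishing sets, use Remark~\ref{vertarcs} to see $Z\cap Y$ as a union of vertical arcs) matches the paper, but at the two technical pressure points your argument has real gaps. The most serious is part (2): you treat the ($\Leftarrow$) direction as something one ``builds directly'' and spend your effort on ($\Rightarrow$), whereas ($\Leftarrow$) is where the content lies. Even when the vanishing cycles at $\Delta(\pm1)$ match, the circle $Z_p$ will in general cross the point-pair $Y_p=\{q,q'\}$ as $p$ sweeps the lune, producing vertical arcs of $Z\cap Y$ that cannot simply be declared absent; the paper's proof shows these crossings come in canceling pairs by observing that the two representatives of the vanishing cycle are isotopic in the doubly punctured fiber $F'\setminus\{q,q'\}$, whose fundamental group is free and distinguishes classes differing by a crossing of $q$ or $q'$ --- only then can a homotopy of $\mathcal{H}$ cancel the arcs in pairs. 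Without an argument of this kind your ($\Leftarrow$) is an assertion, and your intersection-number sketch for ($\Rightarrow$) (which the paper treats as the easy direction, via Example~\ref{r2prop(3)nonex}) does not compensate.

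The second gap is in part (4), where ``transporting across the disk keeps them disjoint'' is not valid: $Z_p$ and $Y_p$ are measured along the \emph{vertical} reference paths $\gamma^{up}_p,\gamma^{down}_p$, not by parallel transport of the disjoint configuration living near a corner, and the discrepancy between the two is exactly what creates intersections of $Z$ with $Y$ over the lune. The paper's proof instead shows that, because the two cycles are disjoint and homologically distinct near the crossings, every intersection point is a corner of a bigon in the fiber, and these bigons can be contracted \emph{in families} by a homotopy of $\mathcal{H}$; that family bigon-contraction is the content of (4) and is missing from your write-up (note also that (1b) as stated concerns the crossing-free finger-move configuration, so quoting it here is structurally off). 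A smaller issue of the same flavor occurs in (1a)/(3): your plan to isotope the point-pair off the circle family ``continuously in $p$ because $\Delta$ is contractible'' over-reaches --- taken at face value it would also prove the false unconditional version of (2) --- and is unnecessary: the paper's argument is a dimension count showing $Z\cap Y$ is at most $1$-dimensional and hence, mapping to finitely many vertical arcs, misses some vertical arc, near which one shrinks $\Delta$ and performs the move.
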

\begin{figure}
	\centering{
	\labellist
	\small\hair 2pt
	\pinlabel $p$ at 95 33
	\endlabellist
	\includegraphics{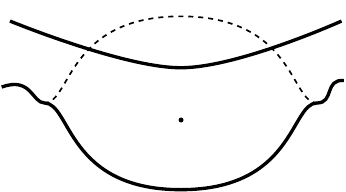}}
	\caption{Preparing to push a bit of a fold arc past another, in what is commonly called a \emph{finger move}, for Proposition~\ref{r2prop}(\protect\hyperlink{r2prop(1)}{1}).}\label{finger}
\end{figure}
\begin{figure}
	\centering{
	\labellist
	\small\hair 2pt
	\pinlabel $p$ at 95 33
	\pinlabel $\Delta(i)$ at 83 95
	\pinlabel $\Delta(-1)$ at 5 35
	\pinlabel $\Delta(1)$ at 165 35
	\endlabellist
	\includegraphics{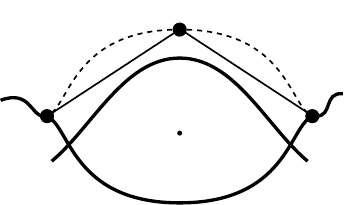}}
	\caption{The disk $\Delta$ with lower fold arc $f_0(\varphi_0)$ in preparation for an $R_2$ deformation in items (\protect\hyperlink{r2prop(2)}{2})-(\protect\hyperlink{r2prop(4)}{4})	 of Proposition~\ref{r2prop}.}\label{r2propfig}
	\end{figure}
\begin{proof}
The typical approach for a statement like Assertion~(\hyperlink{r2prop(1)}{1}) will be to show that there is a choice of horizontal distribution such that $Z\cap Y=\emptyset$. First, note that $p$ must either lie on the lower-genus side of at least one fold arc which we will call $A$ (case (a)), or on the higher-genus side of both (case (b)). In either case, the deformation exists if and only if there is a vertical arc $v\subset\Delta$ connecting the two fold arcs that is disjoint from $f_0(Z\cap Y)$, because in that case one could shrink $\Delta$ so that its interior is a tubular neighborhood of $v$ whose preimage satisfies the disjointness condition.

\hypertarget{r2propitem1aproof}For case (\hyperlink{r2prop(1)}{1a}), $Z\cap Y$ is 1-dimensional if $p$ lies on the lower-genus side of exactly one of the fold arcs, and by Remark~\ref{vertarcs} it maps to a disjoint union of vertical arcs in $\Delta$. If $p$ lies on the lower-genus side of both of the fold arcs then $Z\cap Y$ is $0$-dimensional, the intersection of two surfaces in $M_I$. However, Remark~\ref{vertarcs} implies the dimension of $Z\cap Y$ is at least 1, for any $\mathcal{H}$. For this reason, $Z\cap Y$ must be empty in this case. 

\begin{rmk}\label{digr}If the previous paragraph sounds strange, here is a digression, giving another way to explain its conclusion: Any arc in $Z\cap Y$ is simultaneously the cocore of a three-dimensional 2-handle and the core of a 3-dimensional 1-handle, which is not only non-generic in the space of Morse functions because it is not Morse-Smale, but is also non-generic in the space of one-parameter families of Morse functions (parameterized by the horizontal direction $s$ in Figure~\ref{finger}): Over any horizontal line $\ell(s)$, the pairs of points that run into the top and bottom fold arcs trace paths in the 3-manifold $($fiber$)\times\ell$, and an arc in $Z\cap Y$ is an arc of intersections between these paths. Given such an arc $\gamma\subset Z\cap Y$, one merely perturbs $\mathcal{H}$ so that the $s$-value at which one pair of paths crosses $\gamma$ is different than the $s$-value for the other, and this can be achieved all along $\gamma$ by performing essentially the same perturbation crossed with $\gamma$. This concludes the digression.\end{rmk}

In either case, the dimension of $Z\cap Y$ is no greater than 1, so there is at least one vertical arc in $\Delta$ whose preimage in $Z$ is disjoint from $Y$, and so this arc has a neighborhood whose preimage in $Z$ is disjoint from $Y$, and we perform the finger move in this neighborhood.

\hypertarget{r2propitem1bproof}For case (\hyperlink{r2prop(1)}{1b}), the assumption is that $Z\cap Y=\emptyset$ in the fiber above $p$. In the language of handlebodies, we have a three-dimensional 1-handle attachment followed by a 2-handle attached along a circle that can up to isotopy is disjoint from the belt circle of the 1-handle. In this case, it is also known that the order of handle attachment can be switched by a homotopy of the corresponding Morse function, then un-switched, which similarly gives the suggested $R_2$ deformation.

\hypertarget{r2propitem2proof}For the ($\Leftarrow$) part of Assertion~(\hyperlink{r2prop(2)}{2}), assume the fiber above $p$ is connected, since the result obviously holds if the vanishing cycles of the two fold arcs, as measured from $\Delta(i)$, lie in different fiber components. Denote by $F$ the generic fiber over $\Delta(i)$ and $F'$ the fiber over $p$. Also, denote by $q,q'$ the pair of points in $F'$ corresponding to the fold arc not containing $\varphi_0$, again assumed to be stationary. Because of the matching vanishing cycles, it makes sense to talk about a single vanishing cycle $v$ of the fold arc containing $\varphi_0$ that lives in $F$, measured near either end of $\varphi$ (and also in $F'$ for those points actually in $\varphi$). Choosing small $\epsilon>0$, the representatives of $v$ as measured at $\Delta(\pm1\mp\epsilon)$ are isotopic in $F'\setminus \{q,q'\}$ because they are isotopic in $F$. The surface $F'\setminus\{q,q'\}$ has free fundamental group, in which elements whose representatives differ by crossing $q,q'$ are distinct. This implies that each crossing of $v$ over $q$ or $q'$ (that is, each point in $Z\cap Y$, which is generically a  1-submanifold of $M$) is accompanied by a canceling crossing (paired, say, by letting the endpoints of reference paths travel along horizontal lines across the central lune to get one-parameter families of points and vanishing cycles in $F'$). This allows the disjointness condition to be satisfied by a homotopy of $\mathcal{H}$ that cancels the crossings by identifying vertical arcs in $\Delta$ in pairs. The ($\Rightarrow$) part of Assertion~(\hyperlink{r2prop(2)}{2}) is clear; see Example~\ref{r2prop(3)nonex} for a few examples.

\hypertarget{r2propitem3proof}For Assertion (\hyperlink{r2prop(3)}{3}), after moving all cusps into the higher-genus sides of the two folds, the proof of the analogous case of Assertion (\hyperlink{r2prop(1)}{1a}) (the last two sentences above Remark~\ref{digr}) goes through word-for-word.

\hypertarget{r2propitem4proof}Now for Assertion (\hyperlink{r2prop(4)}{4}). If $Z$ and $Y$ have no intersections above the lune $B\subset\Delta$ bounded by the fold arcs, then it is possible to choose $\Delta$ small enough to make $Z\cap Y=\emptyset$. Assume without loss of generality that $Z$ and $Y$ intersect transversely, and let $p\in L$. Since all fibers are connected, the two vanishing cycles as measured at $p$ using the reference paths $\gamma^{up}_p,\gamma^{down}_p$ are homologically distinct and disjoint for all $p$ sufficiently close to the fold crossings. In this situation, any point of intersection between $Z$ and $Y$ is reflected by an intersection between the vanishing cycles in the fiber using vertical reference paths, and this point is one of a pair that forms the corners of a family of lunes, one in in each fiber, each of whose boundary is a pair of arcs, one in each of the two vanishing cycles. By a homotopy of $\mathcal{H}$, these lunes can be simultaneously contracted, eliminating the intersections of $Z$ and $Y$ from $L$. Now the proposition follows from Theorem~\ref{isotthm}.
\end{proof}
Here is one more existence result for finger moves involving cusps.

\begin{figure}
	\centering
	\includegraphics{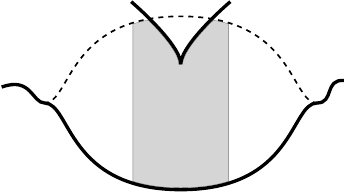}
	\caption{Pushing a fold across a cusp. Here, $\Delta$ could lie on the higher or lower genus side of $f_0(\varphi_0)$.}\label{cuspfold}
\end{figure}
\begin{prop}\label{cuspfoldprop}There always exists a \emph{cusp-fold crossing} deformation as suggested by Figure~\ref{cuspfold}.\end{prop}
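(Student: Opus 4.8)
The plan is to use Theorem~\ref{isotthm}: it suffices to produce a horizontal distribution $\mathcal{H}$ over a neighborhood of $\Delta$ satisfying the disjointness condition, with $Z$ the vanishing set running into $\varphi_0$ and $Y$ the one running into $\crit(f_0)\setminus\varphi_0$. As in every case of Proposition~\ref{r2prop}, it is enough to exhibit a single vertical arc $v\subset\Delta$ --- joining the boundary arc parametrized by $f_0|_{\varphi_0}$ to the opposite boundary arc --- whose preimage in $Z$ is bounded away from $Y$; one then shrinks $\Delta$ to a tubular neighborhood of $v$ and builds the deformation as in Remark~\ref{deffromH}.

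I would localize at the cusp $c$ in $\crit(f_0)\setminus\varphi_0$: take a ball $U$ about $c$ on which $f_0$ is the cusp local model, shrink $\Delta$ into $f_0(U)$, and choose base coordinates so that $\Delta\cap\bigl(\crit(f_0)\setminus\varphi_0\bigr)$ is the semicubical cuspidal arc of the model, whose smooth branches are the two fold arcs $\psi_\pm$ meeting at $c$ and opening away from it. The structural fact I would lean on, visible directly in the model, is that over the vertical line through $c$ the restriction of $f_0$ is the suspended birth--death $x_2\mapsto x_2^{3}$ plus a nondegenerate quadratic form: the fold critical points carried by $\psi_+$ and $\psi_-$ form a cancelling pair as one tends to $c$, and on the side of the cuspidal arc that $c$ points toward there are no critical points at all. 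Since that side is connected around the pointed end, I can route $v$ from $\varphi_0$ around the tip of $c$ so that it meets neither $\psi_+$ nor $\psi_-$; this is exactly the picture in Figure~\ref{cuspfold}, in which the finger of $\varphi_0$ is swept past the tip of the cusp. After shrinking $\Delta$ to a thin neighborhood of such a $v$, the only critical values of $f_0$ inside $\Delta$ lie on $\varphi_0$, so $Y=\emptyset$, the disjointness condition holds vacuously, and Theorem~\ref{isotthm} yields the deformation. Because $Y$ is empty, the side of $f_0(\varphi_0)$ on which $\Delta$ lies plays no role --- in contrast with the obstructed higher-genus fold--fold crossing of Proposition~\ref{r2prop}(\hyperlink{r2prop(2)}{2}), whose $\pi_1$-obstruction in a twice-punctured fiber disappears once the cusp is permitted to merge those two punctures.

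The main, though modest, obstacle I expect is the locality bookkeeping: confirming that the crossing suggested by Figure~\ref{cuspfold} over the original $\Delta$ really is realized by the deformation over the shrunken tubular neighborhood of $v$ --- that sweeping a small finger of $\varphi_0$ past the pointed end of $c$ inside $\nu v$ is the same move up to enlarging the finger --- which is the analogue for cusp--fold crossings of the locality remark underlying Proposition~\ref{r2prop}(\hyperlink{r2prop(1)}{1a}). If instead one fixes a configuration in which every admissible $v$ must cross $\psi_+$ and then $\psi_-$ near $c$, the cancelling-pair structure above still suffices: one first homotopes $f_0$ inside $U$ to cancel the pair, temporarily erasing the cuspidal arc near $c$, then slides $\varphi_0$ past the resulting empty neighborhood of $c$, then reintroduces the pair, and the composite again realizes the crossing. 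Either way, the conclusion is a vertical arc $v$ with the required disjointness, so Theorem~\ref{isotthm} applies and the proposed deformation exists.
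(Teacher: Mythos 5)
Your overall framework---reduce to Theorem~\ref{isotthm} and look for a vertical arc $v$ whose $Z$-preimage avoids $Y$---is the right one and matches the paper, but the step that carries the argument does not work. You propose to route $v$ around the tip of the cusp so that it misses both branches $\psi_\pm$, then shrink $\Delta$ to a tubular neighborhood of $v$ so that $Y=\emptyset$ and the disjointness condition is vacuous. A deformation supported over a thin disk that is disjoint from the cusp point and from both of its fold branches can never carry $f_0(\varphi_0)$ past the cusp: the whole content of the move in Figure~\ref{cuspfold} is that the cusp ends up on the other side of $f_0(\varphi_0)$, so any disk $\Delta$ realizing it must contain the cusp point in its interior, hence portions of both branches, hence has $Y\neq\emptyset$ over it. (This is different from the finger move of Proposition~\ref{r2prop}(\hyperlink{r2prop(1)}{1a}), where a thinner finger placed at any admissible vertical arc still realizes the suggested crossing.) Your fallback---homotope $f_0$ inside $U$ to ``cancel the pair, temporarily erasing the cuspidal arc near $c$''---is not an available move either: the critical locus of $f_0$ is a closed $1$-submanifold whose two fold branches continue outside $U$, so no homotopy supported in $U$ can delete an open arc of it; the birth--death cancellation you are invoking is what happens fiberwise as one crosses the cusp in the base, not a local modification of the map.

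What the paper actually does is keep $Y$ (the vanishing set of the two cuspidal fold arcs) nonempty and analyze it: it is a pair of disks, as in case (\hyperlink{r2prop(1)}{1a}), except \emph{immersed} rather than embedded, with an arc of double points limiting to the cusp (reflecting the fact that the two vanishing cycles of the branches meet at one point in the fiber). One first perturbs $\mathcal{H}$ to remove intersections of $Z$ with $Y$ along that double-point arc, exactly as in the digression of Remark~\ref{digr}, and then the dimension count from case (\hyperlink{r2prop(1)}{1a}) shows that $f_0(Z\cap Y)$ is a union of vertical arcs missing some vertical arc through the relevant region, which gives the disjointness condition. You would need to supply this analysis of the immersed vanishing set near the cusp; the ``$Y=\emptyset$'' shortcut proves only the trivial statement that a fold arc can be pushed across a disk of regular values.
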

\begin{proof}The proof of this result is just the same as that of  Proposition~\ref{r2prop}(\hyperlink{r2prop(1)}{1a}), except that $Z$, instead of being a pair of embedded disks given by the fiberwise feet of a 3-dimensional 1-handle, is an immersed pair of disks whose immersion locus is an arc of double points limiting to the cusp, and the argument applies after perturbing $\mathcal{H}$ to eliminate intersections between $Z$ and $Y$ along this arc similarly to Remark~\ref{digr}.\end{proof}

\begin{ex}\label{r2prop(3)nonex}Here are a few examples illustrating subtleties surrounding Proposition~\ref{r2prop}\hyperlink{r2prop(2)}{(2)}. Suppose that above $\Delta^{-1}(\{\Rp z=0\})$ there is an arc of intersection between $Z$ and $Y$, where the vanishing cycle of $\varphi_0$ crosses once over one of the points that runs into the other fold arc, traveling along the reference path $\{\Rp z=0\}$. Then up to isotopy the two vanishing cycles mentioned in Proposition~\ref{r2prop}\hyperlink{r2prop(2)}{(2)} differ by sliding over a disk bounded by the other fold arc's vanishing cycle, so that the $R_2$ deformation would result in the particular impossibility that validates the ($\Rightarrow$) part of the statement: $\varphi_0$ would be free of intersection points with other folds and have vanishing cycles at either end that are not isotopic in the fiber above $\Delta(i)$.

Another more subtle problem occurs when the regular fiber over the lune is a torus or a sphere. In this case, there can be many vertical arcs in $f_0(Z\cap Y)\cap\Delta$ in which the vanishing cycle crosses one of the points with the same orientation (think of a meridian of a torus traveling around and around the torus, repeatedly crossing some marked point). Unlike the previous example, the vanishing cycles of $\varphi_0$ at $\Delta(\pm1)$ can match, and the base diagram one might expect from such a move might not have obvious inconsistencies, but the proposed movement of fold arcs in the base is still not valid by Theorem~\ref{isotthm}.

In all these cases, $Z\cap Y$ is generically 1-dimensional: over every vertical arc (oriented downward) in $\Delta$ the Morse function corresponds to a pair of consecutive 3-dimensional 2-handle attachments, and every point of $Z\cap Y$ is an intersection between the cocore of the first 2-handle (1-dimensional in each arc, sweeping out a 2-dimensional subset of $M_0$) and the core of the second (2-dimensional in each arc, sweeping out a 3-dimensional subset of $M_0$). Certainly there is no way to switch the order of handle attachments while keeping the intersection between core and cocore inside the lune, and (as the first example shows) there is no general way to move a vertical arc of intersections out of $\Delta$ by a homotopy.\end{ex}

\begin{figure}[h]
	\centering{
		\labellist
		\small\hair 2pt
		\pinlabel $B$ at 10 10
		\pinlabel $A$ at 85 10
		\pinlabel $C$ at 165 10
		\endlabellist
		\includegraphics{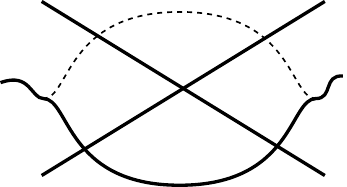}}
	\caption{A Reidemeister-3 fold crossing.}\label{triangle}
\end{figure}
\begin{prop}[Existence of Reidemeister-3 fold crossings]\label{r3prop}A Reidemeister-3 fold crossing exists if there are no cusps in the triangle and the minimal fiber genus in a neighborhood of the triangle is at least 2.\end{prop}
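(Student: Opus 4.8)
The plan is to invoke Theorem~\ref{isotthm} and produce a horizontal distribution $\mathcal{H}$ realizing the disjointness condition of Definition~\ref{dsjtcond}, following the template of the proof of Proposition~\ref{r2prop}(4). Write $\varphi_0$ for the side of the triangle that is to be pushed across the opposite vertex, and $A$, $B$ for the two fold arcs meeting at that vertex. First I would fix a thin bigon $\Delta$ with $\varphi_0$ running along its lower boundary arc, arranged so that the only crossing of $\crit(f_0)\setminus\varphi_0$ in its interior is the transverse double point $A\cap B$ and so that $\overline{\Delta}$ lies inside the neighborhood of the triangle over which every fiber has genus at least $2$. Since there are no cusps in the triangle, the vanishing cycles $c$ of $\varphi_0$, $a$ of $A$, and $b$ of $B$ are each well defined independent of where along their arcs they are measured, and after choosing reference paths from a fixed regular value near the triangle, as in the proof of Proposition~\ref{r2prop}(2), all three may be viewed as simple closed curves in one regular fiber $F$. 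As in the proof of Proposition~\ref{r2prop}(1), it is then enough to find a single vertical arc $v\subset\Delta$ that sweeps past $A\cap B$ and whose preimage in $Z$ is disjoint from $Y$, and to shrink $\Delta$ to a tubular neighborhood of $v$.

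The genus hypothesis would be used exactly as in the proof of Proposition~\ref{r2prop}(4), in two ways. First, the relevant subsurfaces of $F$ have negative Euler characteristic and hence free fundamental group, in which a class and its image under a crossing of $c$ over $a$ or over $b$ remain distinct; this excludes the pathologies collected in Example~\ref{r2prop(3)nonex}, so that every intersection point of $Z$ with $Y$ over the swept region is one corner of an innermost bigon of $F$ bounded by a subarc of $c$ and a subarc of $a$ (or of $b$), and by Remark~\ref{vertarcs} and the perturbation of Remark~\ref{digr} these corners occur in cancelling pairs and sweep out families of lunes over subintervals of the vertical arcs of $\Delta$. Second, at each of the three crossings of the triangle, viewed from the quadrant lying on the higher-genus side of both crossing arcs, the two vanishing cycles involved are disjoint — otherwise one of them would fail to survive the surgery by which one crosses the other fold arc, contradicting its being that arc's vanishing cycle as measured from the adjacent region — so that, for $\Delta$ small, $c$ carries no essential intersections with $a$ or with $b$. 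Contracting the families of lunes by a homotopy of $\mathcal{H}$, one innermost family at a time, then removes all of $Z\cap Y$ over a neighborhood of a suitable $v$, and Theorem~\ref{isotthm} yields the desired $R_3$ deformation.

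The step I expect to be the main obstacle is organizing the bigon cancellations for the two pairs $(c,a)$ and $(c,b)$ simultaneously, given that $a$ and $b$ may themselves meet inside the triangle: one must check that an innermost $(c,a)$-lune is never obstructed by an arc of $b$, and symmetrically, so that the homotopies of $\mathcal{H}$ can be iterated without recreating crossings, and one must transport the three ``local'' disjointness statements above along $\varphi_0$ into a single fiber without the transports interfering. This is where both hypotheses are essential — the genus bound both to keep the bigons innermost in a subsurface with free fundamental group and to leave room to separate $c$ from $a\cup b$, and the absence of cusps to keep $a$, $b$, and $c$ rigid throughout the sliding. Once this bookkeeping is settled, shrinking $\Delta$ to a tubular neighborhood of $v$ produces a disk and horizontal distribution satisfying the disjointness condition, and the proposition follows from Theorem~\ref{isotthm}.
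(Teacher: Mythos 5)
There is a genuine gap here, and it sits exactly where you predicted your ``main obstacle'' would be --- but it is worse than a bookkeeping issue. Your proposal runs every intersection of $Z$ with $Y$ through the bigon/lune-contraction machinery of Proposition~\ref{r2prop}(\hyperlink{r2prop(2)}{2}) and (\hyperlink{r2prop(4)}{4}). That machinery only makes sense when \emph{both} objects meeting in the fiber are vanishing cycles, i.e.\ when the triangle lies on the higher-genus side of both fold arcs involved. The paper instead opens with a four-way case analysis on the number $n$ of fold arcs having the triangle on their lower-genus side, because this changes what $Z$ and $Y$ are fiberwise: when the triangle is on the lower-genus side of an arc, the corresponding piece of $Z$ or $Y$ is a pair of points (the feet of a 3-dimensional 1-handle), not a circle, and there is no bigon to contract. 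Example~\ref{r2prop(3)nonex} shows that ``vanishing cycle crosses a point'' intersections are genuinely non-removable by homotopies of $\mathcal{H}$; your argument would simply stall in the $n=1$ and $n=2$ cases. The idea that actually makes $R_3$ succeed where the corresponding $R_2$ move fails --- namely that a non-removable vertical segment of $f_0(Z\cap Y)$ joining $\varphi_0$ to one of the other two arcs can be slid \emph{sideways out of the triangle across the third fold arc}, once the third arc's vanishing data has been made disjoint from it --- does not appear in your proposal at all, and it is the heart of the paper's proof in every case except $n=3$.

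Two smaller but real problems. First, the absence of cusps does not make the vanishing cycles ``well defined independent of where along their arcs they are measured'': a vanishing cycle depends on the reference path, and transporting the disjointness seen at one corner of the triangle to the fiber near the opposite vertex can introduce intersections. The paper handles this by using connectedness and genus $\geq 2$ to choose $\mathcal{H}$ so that different reference paths give \emph{equal} (not merely isotopic) vanishing cycles, and Example~\ref{r3ex} is a genus-1 configuration where exactly this step fails; you flag the transport issue but do not resolve it. Second, your disjointness claim ``so that, for $\Delta$ small, $c$ carries no essential intersections with $a$ or with $b$'' is drawn from the wrong corner: shrinking $\Delta$ around the crossing $A\cap B$ gives disjointness of $a$ from $b$, while the disjointness of $c$ from $a$ and from $b$ is certified only at the other two corners and must survive the transport just mentioned. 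The correct observation (and the one the paper uses) is that $b_p$ and $c_p$ can be made honestly disjoint in every fiber over the triangle, which kills triple points and clears the path for pushing the remaining intersections out across the third arc.
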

\begin{proof}Figure~\ref{triangle} depicts the closure $\Delta$ of a neighborhood of a triangle of fold arcs. Foliate $\Delta$ by a family of vertical paths, each of which can be considered the target of a member of a one-parameter family of Morse functions. There are four cases, according to the number $n$ of fold arcs that have the central triangle on their lower-genus sides. In all cases, the restriction of the map to the preimage of each leaf is a Morse function with three critical points and two or three critical values. The depicted Reidemeister-3 fold crossing can be thought of as affecting the vertical order of the critical values of these Morse functions in various ways. For any Reidemeister-3 fold crossing, we let the disk $\Delta$ be the closure of a regular neighborhood of the triangle formed by the three fold arcs, with $f_0(\varphi_0)$ coinciding with $A$. Since the fiber is connected with genus at least 2 over every regular value, the vanishing cycles of any one of the three folds as measured by different reference paths from the some arbitrarily chosen point can be assumed to be equal by appropriate choice of horizontal distribution, not just isotopic. Example~\ref{r3ex} shows why this is a necessary assumption.

The case $n=3$ can be satisfied as in Proposition~\ref{r2prop}(\hyperlink{r2prop(3)}{3}): though they would be intersections between two surfaces in a 4-manifold, intersections between the ascending manifold of $A$ and the descending manifold of $B$ or $C$ are not generic.

For $n=2$, without loss of generality suppose the triangle is on the higher-genus side of $B$ and the lower genus sides of $A$ and $C$. As in the paragraph above Proposition~\ref{r3prop}, generically the intersection between the ascending manifold of $A$ and the descending manifold of $B$ maps to a collection of vertical line segments that are not clearly removable. However, it is possible to move such a line segment to the left across $C$, out of the triangle, to lie in the higher-genus side of $C$. Intersections coming from $A$ and $C$ are ruled out as in the case $n=3$.

For $n=1$, without loss of generality suppose the triangle is on the higher-genus sides of $B$ and $C$ and the on the lower genus side of $A$. As in the case $n=2$, $Z\cap Y$ is a finite number of line segments connecting $A$ to $B$ (or $A$ to $C$). Since $B$ and $C$ intersect, their vanishing cycles $b_p,c_p$ (as measured from any reference point $p$ in their higher-genus sides) are isotopic to disjoint circles. For this reason, for every $p$, every transverse intersection $b_p\cap c_p$ is one of a pair of corners of a lune in $F_p$, with one side lying in $b_p$ and the other side lying in $c_p$. By smoothness of the fibration map, these lunes can be chosen to vary smoothly with $p$. For all $p$ on the higher genus side of both $B$ and $C$, simultaneously collapse the lunes by an isotopy sending the $b_p$ side to the $c_p$ side of each. What results in each fiber is a collection of arcs (perhaps even all of $B$ and $C$ if they are isotopic) on which the intersections of the fiber with $B$ and $C$ coincide. These arcs vary smoothly with $p$ and it is possible to perturb $b_p$ smoothly over all $p$ to replace each arc with two parallel arcs, one for $b_p$ and one for $c_p$. Now points in $Z\cap Y$ coming from $A$ and $B$ are necessarily disjoint from $c_p$ for all $p$ in the triangle, so they can be pushed out of the triangle across $C$, and those points in $Z\cap Y$ coming from $A$ and $C$ can be pushed out across $B$.

The $n=0$ case, which is not used in this paper because it is strictly genus-decreasing, is included for completeness. Here, $Z\cap Y$ is generically a surface in $M$ mapping to a codimension 0 subset of $\Delta$, naturally interpreted as a collection of intersection points between vanishing cycles. Let $a_p,b_p,c_p$ be the three vanishing cycles for $A,B,C$, respectively, measured from a reference fiber $F_p$ over a point $p$ in the triangle. All three fold arcs intersect pairwise, so that $a_p,b_p,c_p$ are isotopic to pairwise disjoint circles in $F$. As in the $n=1$ case, it is possible to arrange for $b_p$ and $c_p$ to be disjoint in every fiber on the higher-genus sides of $B$ and $C$. In particular, this eliminates any triple intersections in $Z\cap Y$, so that points in that set coming from $A$ and $C$ are disjoint from $B$, so they can be pushed across $B$, and similarly those coming from $A$ and $B$ can be pushed across $C$. The three remaining bits at the angles opposite the corners of the triangle can be assumed free of intersections by choosing $\Delta$ small enough.
\end{proof}
\begin{ex}\label{r3ex}This example, adapted from one due to an anonymous referee for \cite{W2}, shows why the genus assumption in Proposition~\ref{r3prop} may be necessary. Suppose $n=0$, so that the triangle is on the higher-genus side of all three fold arcs, the regular fiber over any point inside the triangle has genus 1, and that the vanishing cycles for all three fold arcs are isotopic in the central fiber to a meridian. Choose the reference fiber that lies over the point $p$ at the center of the triangle, which we suppose is equilateral. For a short interval of radial reference paths from $p$ to points in the interior of one of the sides of the triangle, suppose the vanishing cycle travels once around the torus. In this situation, it seems the disjointness condition between $Z$ and $Y$ cannot be arranged by pushing intersections out of the triangle.\end{ex}

\begin{prop}[Genus-increasing movements of an isolated fold arc]\label{giprop}There exists a sequence of two-parameter crossings as suggested by Figure~\ref{isot} when the lune is on the lower-genus side of $f(\varphi_0)$, $f(\varphi_0)$ does not contain any crossings in the critical image, and all components of all regular fibers inside the lune have genus at least 2.\end{prop}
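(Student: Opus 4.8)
The plan is to realize the motion of $\varphi_0$ across $\Delta$ as a finite concatenation of the elementary deformations already established in Propositions~\ref{r2prop}, \ref{cuspfoldprop} and~\ref{r3prop}, pushing $\varphi_0$ past the rest of the critical image one feature at a time; it is because all of $\varphi_0$ must be moved, not just a bit of it, that the conclusion is a \emph{sequence} of crossings rather than a single one.

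First I would put $\crit(f_0)$ in general position, so that $\crit(f_0)\cap\Delta$ consists, apart from $\varphi_0$, of finitely many fold arcs meeting one another transversely and joined in cusps, in general position both with respect to the vertical foliation of $\Delta$ suggested by Figure~\ref{isot} and with respect to $\varphi_0$. Since $\iota\cap\varphi_0=\emptyset$, the arc $\varphi_0$ is a plain fold arc disjoint from all of this, so over any point of $\Delta$ on its lower-genus side its vanishing set is just the pair of feet of a three-dimensional $1$-handle; this is the structure to exploit. I would then schedule the motion $\Delta(z)\mapsto\Delta(t\bar z+(1-t)z)$ so that $\varphi_0$ sweeps monotonically across $\Delta$, crossing the remaining critical image in a sequence of elementary passages: a push of a portion of $\varphi_0$ past an arc of a single other fold (a finger move as in Figure~\ref{finger}), past a cusp (a cusp-fold crossing as in Figure~\ref{cuspfold}), or past a transverse crossing of two other folds (an $R_3$ triangle as in Figure~\ref{triangle}, with $\varphi_0$ in the role of the arc $A$). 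Between consecutive passages $\varphi_0$ travels through a portion of $\Delta$ containing no other critical image, where the deformation is a trivial two-parameter crossing or an ambient isotopy of $D^2$; and if $\varphi_0$ is a closed fold circle that must be carried fully across another arc, this is a finger move creating two crossings, an ambient isotopy sliding them around $\varphi_0$, and a finger move annihilating them on the far side.

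The content is that each elementary passage exists under our hypotheses and, crucially, that those hypotheses persist at every intermediate stage. At every stage the not-yet-swept part of $\Delta$ --- which contains all critical image that $\varphi_0$ has still to pass --- lies on the lower-genus side of $\varphi_0$; this is exactly the hypothesis that ``the lune is on the lower-genus side of $f(\varphi_0)$'', and it is preserved because sweeping $\varphi_0$ only raises the fiber genus over the swept region, never lowers it. Hence for a finger-move passage the marked point $p$ of Figure~\ref{finger} may be taken in the unswept part of $\Delta$, so that $p$ lies on the lower-genus side of $\varphi_0$ and Proposition~\ref{r2prop}(\hyperlink{r2prop(1)}{1a}) applies; a passage past a cusp always exists by Proposition~\ref{cuspfoldprop}; and a passage past a transverse crossing of two other folds happens inside a triangle contained in $\Delta$, which contains no cusp and, since every regular fiber inside $\Delta$ has genus at least $2$ (again a condition preserved, as genus over $\Delta$ can only increase), has minimal nearby fiber genus at least $2$, so Proposition~\ref{r3prop} applies. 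Concatenating all of these deformations yields the required sequence of two-parameter crossings.

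The main obstacle is precisely the bookkeeping of the middle step: checking that the motion really does decompose into such elementary passages and that the hypotheses of Propositions~\ref{r2prop}(\hyperlink{r2prop(1)}{1a}), \ref{cuspfoldprop} and~\ref{r3prop} hold throughout and not merely at $t=0$ --- in particular that pushing $\varphi_0$ partway never drops a fiber component inside an $R_3$ triangle below genus $2$, which follows from genus-monotonicity of the sweep. This is also where the genus-increasing hypothesis is essential: in the genus-decreasing case $\varphi_0$ would contribute a vanishing circle rather than a pair of feet, and such a circle cannot in general be isotoped off the other vanishing cycles, as Examples~\ref{r2prop(3)nonex} and~\ref{r3ex} show.
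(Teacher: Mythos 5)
Your overall strategy---decomposing the sweep of $\varphi_0$ across $\Delta$ into finger moves, cusp-fold crossings and $R_3$ triangles---is the same as the paper's, and your observations that the genus and side conditions are preserved because the sweep only raises fiber genus over the swept region are correct. But you have omitted the step that the paper's proof is actually about. Carrying $\varphi_0$ fully past another fold arc requires not only crossing-\emph{creating} finger moves (Proposition~\ref{r2prop}(\hyperlink{r2prop(1)}{1a}), which is indeed automatic here) but also crossing-\emph{annihilating} $R_2$ moves: the move of Figure~\ref{r2propfig} that removes a lune, needed to merge two fingers into one thick finger or to close up the passage on the far side. You mention this only in passing (``a finger move annihilating them on the far side'') and treat it as automatic. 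It is not. In the relevant configuration the lune lies on the higher-genus side of exactly one of the two arcs (the arc being crossed), so the applicable statement is Proposition~\ref{r2prop}(\hyperlink{r2prop(2)}{2}), whose hypothesis---that the vanishing cycles of that arc at the two crossing points match as measured from inside the lune---is an honest condition on the map, not a consequence of genus monotonicity, and Example~\ref{r2prop(3)nonex} exhibits exactly how such an annihilation can fail even when the base diagram looks consistent.

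The paper closes this gap constructively: it arranges the sweep so that each pair of crossings to be annihilated is created by a preceding pair of finger moves, and it uses the freedom in choosing the path in $M_0$ traced by the tip of the second finger to control where the two fiber points become identified, thereby forcing the two vanishing cycles of the crossed arc to match and making Proposition~\ref{r2prop}(\hyperlink{r2prop(2)}{2}) applicable. Without some such argument, your assertion that ``each elementary passage exists under our hypotheses'' is unjustified at precisely the passages that are hardest, so the proof as written has a genuine gap.
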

\begin{proof}The proposed movement exists by the genus-increasing 2-parameter crossing propositions, namely Proposition~\ref{r2prop}(\hyperlink{r2propitem1a}{1a},\hyperlink{r2prop(2)}{2},\hyperlink{r2prop(3)}{3}) for $R_2$ deformations, Proposition~\ref{cuspfoldprop} for cusp-fold crossings, and $n\neq0$ cases of Proposition~\ref{r3prop} for Reidemeister-3 fold crossings. The hypotheses of these results are satisfied automatically, except those of Proposition~\ref{r3prop} (simply observe that any cusps in the triangle can be first pushed into the higher-genus side of $\varphi_0$, which by hypothesis has no cusps) and Proposition~\ref{r2prop}(\hyperlink{r2prop(2)}{2}).

In the context of Proposition~\ref{r2prop}(\hyperlink{r2prop(2)}{2}) and in Figure~\ref{r2propfig}, the fold arc $\varphi_0$ for the present proposition (call it $\varphi^{\text{present}}$) would be the upper fold arc, while the $\varphi_0$ mentioned in Proposition~\ref{r2prop}(\hyperlink{r2prop(2)}{2}) (call it $\varphi^{(2)}$) would be the lower fold arc. One may choose the movement of $\varphi^{\text{present}}$ as beginning with a pair of finger moves that exist by Proposition~\ref{r2prop}(\hyperlink{r2propitem1a}{1a}), so that the first pair of intersections as in Proposition~\ref{r2prop}(\hyperlink{r2prop(2)}{2}) to form during the movement comes from the two finger moves. By appropriately choosing a path in $M_0$ for the tip of the second finger move to trace, one may choose the location of the two points that become identified in the fiber when $\varphi^{\text{present}}$ passes by; in particular, near the crossings of Figure~\ref{r2propfig}. In this way, it is possible to arrange for the two vanishing cycles of $\varphi^{(2)}$ to match, allowing an application of Proposition~\ref{r2prop}(\hyperlink{r2prop(2)}{2}) to unify the two fingers into one thick finger. The movement continues with another finger move, chosen appropriately as before, and so on until the proposed movement is finished.\end{proof}

\section{Simplifying a Morse 2-function to the 2-sphere}\label{simpsection}
This section gives a new proof of the existence result \cite[Corollary 2]{W1}. The argument in that paper begins with the main theorem of the 41-page paper \cite{GK1}, an existence result that in turn relies on the Giroux correspondence of contact geometry \cite{Et} and Eliashberg's classification of overtwisted contact structures on 3-manifolds \cite{E}. Recently, there appeared another algorithmic existence result in \cite{BS}. This argument is significantly shorter than both, relying instead entirely on Section \ref{thmandapps}.

It is a classical result that any continuous map from a smooth, closed oriented 4-manifold $M$ to the 2-sphere can be perturbed to be a smooth map whose critical locus consists of a smooth disjoint union of circles in $M$, stratified into a collection of definite and indefinite fold arcs which meet at cusp points. The main result of \cite{S} states that the definite locus may be eliminated by homotopy, resulting in what was named an \emph{indefinite Morse 2-function} in \cite{GK3}. This section is about indefinite Morse 2-functions from a closed, orientable 4-manifold $M$ to $S^2$. The critical locus of such a map may have more than one path component and its image may have transverse double points.

\begin{df}An \emph{equivalence} $f_t$ of indefinite Morse 2-functions $f_0$, $f_1$ is a deformation which begins with $f_0$, ends with $f_1$, and consists of a sequence of the moves of \cite{Le} (birth, merge, flip) and the various indefinite 2-parameter crossings of \cite{GK2}.\end{df} 
\begin{thm}\label{T}For any indefinite Morse 2-function $f\co M\to S^2$, there is an algorithm that produces an equivalence between $f$ and an indefinite Morse 2-function which is injective on its critical locus, which is connected.\end{thm}
\begin{proof}The algorithm is to eliminate the crossings between indefinite fold arcs (Lemma \ref{clemma}), then unify the resulting collection of circles without introducing further crossings (Lemma \ref{ulemma}). Figure~\ref{c} gives the recipe for Lemma \ref{clemma}, and its explanation is the main content of the section.\end{proof}
The map $f$ in Theorem \ref{T} is known as a \emph{simple wrinkled fibration} in \cite{BH} or a \emph{simplified purely wrinkled fibration} in \cite{Le,W1}, and while the term \emph{Morse 2-function} due to Gay and Kirby has become the standard term for such maps when $f|_{\crit f}$ is allowed to have crossings and $\crit f$ is allowed to have multiple components, this author refers to the maps in the above definition as \emph{crown maps}, a term due to Rob Kirby.
\begin{lemma}[Resolving crossings]\label{clemma}The movements suggested by Figure~\ref{c} come from an equivalence.\end{lemma}
\begin{proof}The following argument uses Proposition \ref{giprop}, so the first step is to make sure the assumption on genus in that proposition is satisfied, using what will be called \emph{genus inflation} in Remark \ref{genusinflationremark} below. Genus inflation begins by first performing birth moves, then performing finger moves (Proposition \ref{r2prop}(\hyperlink{r2prop(1)}{1a})), cusp-fold crossings (Proposition \ref{cuspfoldprop}), and Reidemeister-3 fold crossings (Proposition \ref{r3prop}) so that the lunes in $S^2$ introduced by the birth moves (that is, the disks in $S^2$ bounded by the images of the critical circles) have sufficient overlap to increase the genus of all fiber components to at least 2. The use of Proposition \ref{r3prop} in that process has its own fiber genus requirement, which can be satisfied by the following construction: For a chosen fiber component $F$ of the fibration map $f$, choose an open 2-dimensional disk $D\subset M$ of regular points passing through $F$ and such that	 $f|_D$ is a diffeomorphism onto a neighborhood of $\Delta$ (such $D$ exists because any transverse intersection between a 2-dimensional disk and $\crit f$ is empty). Then there is a neighborhood $\nu D$ of $D$ on which $f$ is a submersion, so it is possible to perform a birth move on $f|_{\nu D}$ whose critical circle has image approximating $f(\partial D)$, increasing the fiber genus in $\Delta$ by a self-connect sum at each point $D\cap f^{-1}(x)$, $x\in\Delta$.

Figure~\ref{c1} depicts a fold crossing, with arrows as in \cite{GK3} indicating that the restriction of $f$ to the preimage of each indicated transverse oriented arc is a Morse function with a single index 2 critical point. In a move inspired by \cite[Figure 10]{Le}, perform two flips to obtain \ref{c2}, followed by a cusp merge that exists as discussed in \cite{BH,Le} to obtain \ref{c3}. Figure~\ref{c4} is an application of Proposition \ref{giprop} according to the dotted line in Figure \ref{c3}.

\begin{figure}[h]
    \centering
    \begin{subfigure}{2.5cm}
	\labellist
	\small\hair 2pt
	\pinlabel $2$ at 33 20
	\pinlabel $2$ at 47 20
	\endlabellist
        \includegraphics{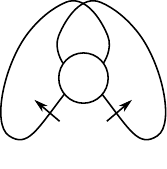}
        \caption{A crossing.}
        \label{c1}
    \end{subfigure}
    \quad
    \begin{subfigure}{2.5cm}
        \includegraphics{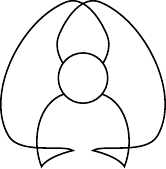}
        \caption{Two flips.}
        \label{c2}
    \end{subfigure}
    \quad
    \begin{subfigure}{2.5cm}
	\labellist
	\small\hair 2pt
	\pinlabel $\beta$ at 27 19
	\pinlabel $a$ at 19 32
	\pinlabel $b$ at 24 68
	\pinlabel $c$ at 60 32
	\endlabellist
        \includegraphics{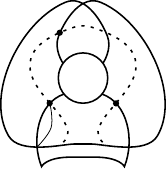}
        \caption{Cusp merge.}
        \label{c3}
    \end{subfigure}
    \quad
    \begin{subfigure}{2.5cm}
        \includegraphics{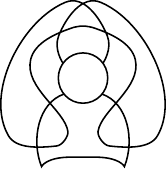}
        \caption{Prop. \ref{giprop}}
        \label{c4}
    \end{subfigure}\\ \vspace{.5cm}
    \begin{subfigure}{2.5cm}
        \includegraphics{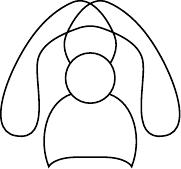}
        \caption{Prop. \ref{r2prop}(\protect\hyperlink{r2prop(2)}{2}).}
        \label{c5}
    \end{subfigure}
    \quad
    \begin{subfigure}{2.5cm}
        \includegraphics{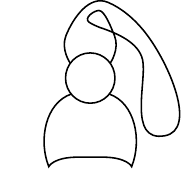}
        \caption{Prop. \ref{r2prop}(\protect\hyperlink{r2prop(2)}{2}).}
        \label{c6}
    \end{subfigure}
    \quad
    \begin{subfigure}{2.5cm}
        \includegraphics{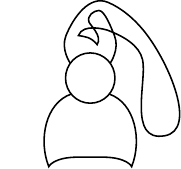}
        \caption{Flip.}
        \label{c7}
    \end{subfigure}
    \quad
    \begin{subfigure}{2.5cm}
        \includegraphics{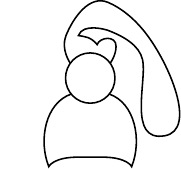}
        \caption{Prop. \ref{r2prop}(\protect\hyperlink{r2prop(3)}{3}).}
        \label{c8}
    \end{subfigure}
    \caption{Resolving a crossing between the images of two fold arcs. These are base diagrams, where $S^2$ is punctured at a point near the crossing and the rest of the critical image is in the circle in each figure. The dots and the curve $\beta$ in \ref{c4} correspond to parts of $\partial\Delta$ along which the suggested movement is required to satisfy an extra condition.}\label{c}
\end{figure}

In this paragraph, let $f_0$ denote the fibration depicted in Figure \ref{c3} and let $\varphi_0$ denote the fold arc forming part of 
$\partial\Delta$, with the rest of $\partial\Delta$ depicted by the dotted line in that figure. As with any movement of a fold arc, the 
movement depicted in Figures \ref{c3}-\ref{c4} is encoded by the ascending manifold $Z$ of $\varphi_0$ over $\Delta$. This movement is  
genus-increasing, so $Z$ intersects each fiber over $\Delta\setminus\varphi_0$ in two points. As discussed in the previous section, a change  
in $Z$ is equivalent to a change in the horizontal distribution $\mathcal{H}$ used to define the deformation which moves $\varphi_0$ (see also \cite[Lemma 4.6]{BH}). With  
this in mind, the claim is that it is possible to choose $Z$ for the movement depicted in Figures \ref{c3}-\ref{c4} so that the matching  
requirement of Proposition \ref{r2prop}(\hyperlink{r2prop(2)}{2}) is satisfied for the movements in Figures \ref{c4}-\ref{c6}. To prove this
claim, consider the lune containing the smooth injective curve $\beta\co[0,1]\to S^2$ in Figure \ref{c3}. The left side is a fold arc and the 
right side comprises two arcs, one in $\varphi_0$ and the other in $\partial\Delta$. Foliate this lune by line segments $\ell_t(s)\co[0,1]\to 
S^2$ transverse to $\beta$, where $\ell_t\cap\beta=\beta(t)$. Parameterize $\ell_t$ and $\beta$ so that $\ell_t(0)\in\partial\Delta$ for $t\geq1/2$. 
In the fiber $f_0^{-1}(\beta(1/2))$, let $C_t$ be the vanishing cycle measured from $\ell_t(1)$, using a reference path that travels along 
$\beta$ from $\beta(1/2)$ to $\beta(t)$ and then along $\ell_t$. Let $(z_1,z_2)_t$ be the pair of points that run into either $Z$ or the 
critical point mapping to $\ell_t(0)$ using the analogous reference path. This data specifies a family $\mathcal{F}_t=(C,z_1,z_2)_t$, 
$t\in[0,1]$. In other words, $\mathcal{F}_t$ is a circle and two points, with each one moving around the regular fiber by isotopy, perhaps 
intersecting each other, with the paths taken by $z_1$ and $z_2$ for $t>1/2$ dictated entirely by the placement of $Z$. Now isotope $Z$ in fibers near $f_0^{-1}(a)$, changing $\mathcal{F}_t$ for $t\in(1/2,1)$, so that 
$\mathcal{F}_t$ is isotopic to a family $\mathcal{F}'_t$ satisfying $\mathcal{F}'_t=\mathcal{F}'_{1-t}$. With this achieved, the required 
matching condition is satisfied, and there are analogous modifications near the points $x$ and $y$ for the other two applications of 
Proposition \ref{r2prop}(\hyperlink{r2prop(2)}{2}) depicted in \ref{c4}-\ref{c6}. 

A homotopy inducing the rest of the movements (\ref{c6}-\ref{c8}) exists by the existence of the flipping move and Proposition \ref{r2prop}(\hyperlink{r2prop(3)}{3}).\end{proof}
\begin{lemma}\label{ulemma}The critical circles from Lemma~\ref{clemma} can be unified without introducing new intersection points.\end{lemma}
\begin{proof}Given a fibration that is injective on its critical locus and has more than one critical circle, there is a pair of circles $S_1$ and $S_2$ of critical values bounding disjoint embedded open disks $D_1$, $D_2$, respectively, with a smooth embedded path $\beta$ of regular values connecting $p_1\in S_1$ to $p_2\in S_2$. 

If $\beta$ lies on the higher-genus side of each circle, then the circles can be unified by two flips and a cusp merge as in Figures \ref{c1}-\ref{c3}. Each disk $D_i$ now has one crossing in its boundary coming from the flips. These can be eliminated by a flip and an application of Lemma \ref{giprop} followed by \ref{r2prop}(\hyperlink{r2prop(3)}{3}), as in Figures \ref{c5}-\ref{c8} ($D_i$ corresponds to the lune in Figure \ref{c7} which is gone in Figure \ref{c8}).

If $\beta$ lies on the lower-genus side of $S_i$, pick a small neighborhood $\nu p_i\subset S^2$, define $\Delta=S^2\setminus(\nu p_i\cup D_i)$ and apply Proposition \ref{giprop}. This causes $\beta$ to lie on the higher-genus side of $S_i$. In this way, reduce to the previous case.\end{proof}
\begin{rmk}\label{genusinflationremark}It is not necessary for this paper, but it seems likely that the modifications subsequent to genus inflation up to this remark could be performed using homotopies whose support is disjoint from that of the genus inflation. In that case, the genus inflation would have been unnecessary and Example \ref{r3ex} would present a non-issue.\end{rmk}
\section{Refined uniqueness results for crown diagrams}\label{refineduniqnesssection}
This section uses a number of ideas and results from \cite{W2}, which in turn uses results from Section \ref{thmandapps}. It also relies on the excellent papers of Behrens and Hayano \cite{H,BH} as a starting point for its main results; some familiarity with these three papers will be necessary to follow its arguments. Terminology from \cite{GK3} is used as much as possible, though the content of that paper is not required for this section.

Continuing the theme of this paper, this section is about modifying certain fibration maps using existence results for homotopies of those maps. These modifications are used to better understand how much variety is available to the collection of \emph{crown diagrams} coming from a fixed homotopy class of maps $M\to S^2$. 
\begin{df}\label{sddef}Suppose $f\co M\to S^2$ is an indefinite Morse 2-function which is injective on its critical set, and its critical set is connected. Choose a reference fiber $\Sigma$ on the higher genus side and let $\Gamma=(\gamma_i)_{i\in\Z/k\Z}$ be the cyclically indexed sequence of vanishing cycles up to isotopy measured from $\Sigma$. If the genus of $\Sigma$ is at least 3, then the pair $(\Sigma,\Gamma)$ is called a \emph{crown diagram} of $M$.\end{df}This section freely uses language from \cite{BH,GK2,W2}. It is known that every smooth closed oriented 4-manifold has crown diagrams coming from maps in every homotopy class of maps to $S^2$ \cite{W1}, and that any crown diagram specifies $M$ up to orientation-preserving diffeomorphism \cite{W1}. The uniqueness result this section aims to refine is as follows.
\begin{thm}[\cite{W2}]\label{p2thm}Suppose $\alpha_0,\alpha_1\co M\to S^2$ are two homotopic crown maps with crown diagrams $(\Sigma^i,\Gamma^i)$, $i=0,1$. Then there is a finite sequence of crown diagrams, beginning with $(\Sigma^0,\Gamma^0)$ and ending with $(\Sigma^1,\Gamma^1)$, obtained by performing stabilizations, handleslides, shifts, multislides, and their inverses.\end{thm}
In view of the goal of establishing smooth 4-manifold invariants taking crown diagrams as input, it would be useful to pare down that list. The diagrammatic moves seem at first to be diverse and complicated, but it turns out there are relations between the moves in the sense that there are many sequences of moves for which it is possible to eliminate, say, all (de)stabilizations (Theorem \ref{stabthm}), and one may unify some of the above moves into a move of a single type (Proposition \ref{genshift}) or into a sequence of moves of a rather simple and familiar type called a \emph{slide}, with moves corresponding to restrictions on the choices available to build such a sequence (Theorem \ref{slidethm}). With these results in mind, establishing any smooth 4-manifold invariant calculated using crown diagrams of a fixed genus largely reduces to proving invariance under slides.

Descriptions of how the four moves affect a crown diagram were first given by Behrens and Hayano as applications of more general results in \cite{H,BH}. It will be useful for the reader to also see \cite[Section 2]{W2} for a unified, concise catalog of the moves on diagrams and their corresponding deformations, almost entirely following the previous two papers. The first result of this section involves a \emph{slide} between two elements $\gamma_i,\gamma_j\in\Gamma$, which resembles the handleslide move for Heegaard diagrams of 3-manifolds. To slide $\gamma_i$ over $\gamma_j$, choose an embedded path $\pi\co[0,1]\to\Sigma$ from a point in $\gamma_i$ to a point in $\gamma_j$, whose intersection with $\gamma_i$ and $\gamma_j$ is precisely its endpoints, and then replace $\gamma_i$ with the connect sum $\gamma_i\#\gamma_j$ in $\Sigma$ specified by that path.

\begin{thm}\label{slidethm}The handleslide and multislide moves affect $\Gamma$ by a sequence of slides. The shift move is also a sequence of slides, followed by a reordering of the elements of $\Gamma$.\end{thm}
\begin{proof}In \cite[Section~2.2]{H} there appears a \emph{surgery homomorphism} \[\Phi_c\co\mcg(\Sigma)(c)\to\mcg(\Sigma'),\] where $\mcg(\Sigma)(c)$ is the mapping class group of orientation-preserving diffeomorphisms of the genus $g$ oriented closed surface $\Sigma$ that fix the unoriented isotopy class of the embedded circle $c$, and $\mcg(\Sigma')$ is the mapping class group of the genus $g-1$ surface $\Sigma'$ (see also the introduction of \cite[Section~2.3]{B1}, where the map is called $\psi_\gamma$). According to \cite[Proposition 6.2]{BH} and \cite[Theorem 3.9]{H} respectively, any multislide or handleslide, applied using the pair $\gamma_1,\gamma_n\in\Gamma$, is realized by applying an element of $\Kr\Phi_{\gamma_1}\cap\Kr\Phi_{\gamma_n}$ to each element of $\{\gamma_2,\ldots,\gamma_{n-1}\}\subset\Gamma$, which we will call the \emph{nonstationary set}. In \cite{BH}, $\Kr\Phi_{\gamma_1}\cap\Kr\Phi_{\gamma_n}$ is denoted $\mathcal{K}(c,d)$, where $c=\gamma_1$ and $d=\gamma_n$.

For handleslides, $\gamma_1$ and $\gamma_n$ are disjoint and their union is nonseparating. Consider the genus $g-2$ surface $\Sigma''$ obtained from replacing $\gamma_1$ and $\gamma_n$ with pairs of disks whose centers are labeled $v_1,v_2$ for the disks coming from $\gamma_1$ and $w_1,w_2$ for the disks coming from $\gamma_n$. Hayano constructs an element of $\Kr\Phi_{\gamma_1}\cap\Kr\Phi_{\gamma_n}$ in \cite[Section~3]{H} as the lift (according to the pair of surgeries specified by the pair of 0-spheres $(v_1,v_2)$ and $(w_1,w_2)$) to $\mcg(\Sigma)$ of the point pushing map defined by an oriented embedded arc $\eta\subset\Sigma''$ connecting $v_i$ to $w_j$, for $i,j\in\{1,2\}$. With this lift understood, it is easy to see that the lemma is proved for handleslides. However, there is another way to see it: In \cite[Lemma~3.8]{H}, Hayano proves this lift can be factored into a product of mapping classes, each of which is a product of Dehn twists $t_{\tilde\delta(\eta)}\cdot t_{\gamma_1}^{-1}\cdot t_{\gamma_n}^{-1}$, where $\tilde\delta(\eta)$ is the obvious lift of the embedded circle $\partial\overline{\nu\eta}$ to $\Sigma$, where $\nu\eta$ is a regular neighborhood of $\eta$. One can then deduce the lemma by checking it for each of the two ways an embedded circle can intersect the triple $\tilde\delta(\eta),\gamma_1,\gamma_n$ as in Figure~\ref{hscheck}.
\begin{figure}
	\centering
	\begin{subfigure}{.4\linewidth}
	\centering
		\labellist
		\small\hair 2pt
		\pinlabel $\searrow$ at 45 50
		\pinlabel $\longrightarrow$ at 90 72
		\endlabellist
		\includegraphics[width=\linewidth]{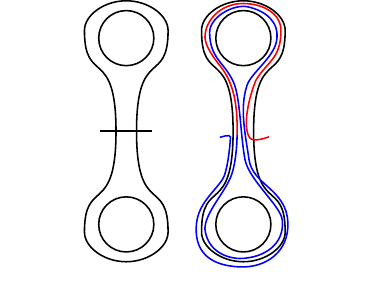}
		\caption{\ }
		\label{hscheck1}
	\end{subfigure}\hspace{1cm}
	\begin{subfigure}{.4\linewidth}
	\centering
		\labellist
		\small\hair 2pt
		\pinlabel $\longrightarrow$ at 52 72
		\pinlabel $\approx$ at 120 72
		\endlabellist
		\includegraphics[width=\linewidth]{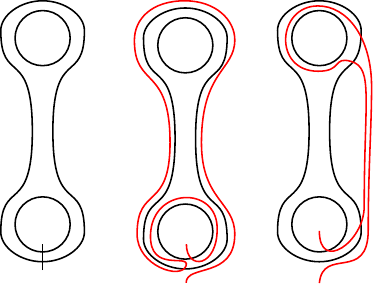}
		\caption{\ }
		\label{hscheck2}
		\end{subfigure}
\caption{Checking that the handleslide and multislide are sequences of slides. For handleslide, the two smaller circles are interchangeably $\gamma_1$ and $\gamma_n$ and the larger circle is $\tilde\delta(\eta)$. For multislide, $a$ is the lower circle, $\partial\Sigma^\circ$ is the upper circle, and the larger circle is $a\#_\zeta\partial\Sigma^\circ$.\label{hscheck}}
\end{figure}

For multislides, $\gamma_1$ and $\gamma_n$ intersect at one transverse point. By \cite[Lemma 5.4]{BH}, $\Kr\Phi_{\gamma_1}\cap\Kr\Phi_{\gamma_n}$ is generated by the mapping classes $\Delta_{\gamma_1,\gamma_n}=(t_{\gamma_1}t_{\gamma_n})^3$ and $\vartheta_{a,\zeta}=t_at_{a\#_\zeta\partial\Sigma^\circ}^{-1}$. The circles $a$ and $a\#_\zeta\partial\Sigma^\circ$ for the Dehn twists in the factorization of $\vartheta_{a,\zeta}$ are constructed as follows. Let $\Sigma^\circ\subset\Sigma$ be the complement of a regular neighborhood of $\gamma_1\cup\gamma_n$. Then $a$ is an arbitrary simple closed curve in the interior of $\Sigma^\circ$ and $a\#_\zeta\partial\Sigma^\circ$ is the simple closed curve given by the connected sum of $a$ and $\partial\Sigma^\circ$ along an embedded arc $\zeta$ whose interior is disjoint from $a$. It is straightforward to use Alexander's method \cite[Proposition 2.8]{FM} to show that $\Delta_{\gamma_1,\gamma_n}$ is isotopic to a half-twist about $\partial\Sigma^\circ$, and Figure \ref{delta} shows that such a mapping class has the effect of slides over $\gamma_1$ and $\gamma_n$ on any nonstationary circle. As for $\vartheta_{a,\zeta}$, if a nonstationary circle intersects $\zeta$, the effect of $\vartheta_{a,\zeta}$ is to apply a slide over $\partial\Sigma^\circ$ as specified by the subarc of $\zeta$ connecting that circle to $\partial\Sigma^\circ$ as in Figure \ref{hscheck1}, where $\partial\Sigma^\circ$ is the upper small circle and $a\#_\zeta\partial\Sigma^\circ$ is the larger circle. Such a slide is realized by two slides over $\gamma_1$ (and also by two slides over $\gamma_2$).
\begin{figure}
	\centering
    \begin{subfigure}{.4\linewidth}
	\centering
		\labellist
		\small\hair 2pt
		\pinlabel $\to$ at 83 34
		\pinlabel $\to$ at 173 34
		\endlabellist
        \includegraphics[width=\linewidth]{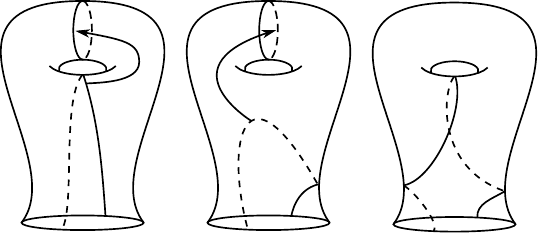}
        \caption{\ }
        \label{delta1}
    \end{subfigure}\hspace{1cm}
    \begin{subfigure}{.4\linewidth}
	\centering
        \includegraphics[width=\linewidth]{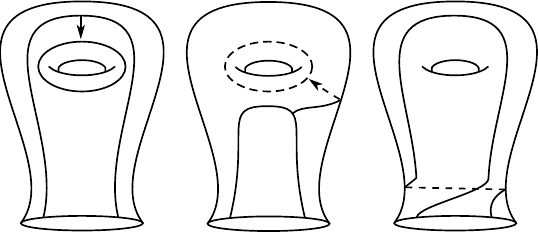}
        \caption{\ }
        \label{delta2}
    \end{subfigure}
    \caption{Checking that $\Delta_{c,d}$, a part of the multislide move, affects a nonstationary circle by a sequence of slides over $c$ and $d$.}\label{delta}
\end{figure}

The last case is the shift move. To make it easier to read, the rest of this proof uses the indexing conventions of \cite[Section 6.2]{BH}, in which $\Gamma=(\gamma_1,\ldots,\gamma_l)$, the initial fold merge occurs between the fold arcs corresponding to elements $\gamma_k,\gamma_l\in\Gamma$ for $1<k<l$ and the cusp merge involves the cusp between the fold arcs corresponding to $\gamma_1$ and $\gamma_l$. Like multislide, shift is a move one can perform for any pair $\gamma_1,\gamma_k$ that intersects at one transverse point, and according to \cite[Lemmas 6.3 and 6.6]{BH} it sends  
\begin{equation}\label{shifteq}(\gamma_1,\ldots,\gamma_k,\ldots,\gamma_l)\mapsto\left(\gamma_1,\ldots,\gamma_k,\gamma_l,\chi_i(\gamma_{k+1}),\ldots,\chi_i(\gamma_{l-1})\right),\end{equation} 
where $i$ can be chosen from $\{1,2\}$. Toward the end of the proof of \cite[Lemma 6.6]{BH}, the mapping class $\chi_i$ has the formula
\[\chi_i=\varphi_i\cdot(t_{\gamma_k}t_{\gamma_l}t_{\gamma_k})^{2m+1},\]
for some $m\in\Z$. Figure \ref{klk} shows that $(t_{\gamma_k}t_{\gamma_l}t_{\gamma_k})^{2m+1}$ affects nonstationary circles by a sequence of slides, so it remains to prove the same for $\varphi_i$. According to \cite[Lemma 6.4]{BH} and the paragraph directly above \cite[Lemma 6.4]{BH}, the map $\varphi_i$ is constructed as follows. First surger $\Sigma$ along $\gamma_l$, replacing that circle with two marked points $v_1,v_2$ to obtain a surface $\Sigma_{\gamma_l}'$. Since $\gamma_1$ and $\gamma_k$ each intersected $\gamma_l$ in $\Sigma$ unique transverse points, they appear as embedded arcs $\gamma_1',\gamma_k'$ in $\Sigma_{\gamma_l}'$, and each has the endpoints $v_1$, $v_2$. Together these arcs form a circle containing $v_1$ and $v_2$, and by slight perturbations they obtain two loops, $\eta_1\subset\Sigma_{\gamma_l}'\setminus\{v_2\}$ and $\eta_2\subset\Sigma_{\gamma_l}'\setminus\{v_1\}$, where $\eta_i$ is based at $v_i$ and is oriented so that it leaves $v_i$ along $\gamma_k'$ and returns along $\gamma_1'$. With that understood, \cite[Lemma 6.4]{BH} can be restated to say that, to apply $\varphi_i$, first replace $\gamma_l$ with a pair of marked disks to obtain $\Sigma_{\gamma_l}'$, then apply a point-pushing map along $\eta_i$ to the nonstationary circles (or arcs, if they intersected $\gamma_l$ in $\Sigma$), then perform surgery along $v_1,v_2$ to get $\Sigma$ back. This is clearly a collection of slides over $\gamma_l$.
\begin{figure}
    \begin{subfigure}{.45\linewidth}
		\centering
		\labellist
		\small\hair 2pt
		\pinlabel $\to$ at 83 150
		\pinlabel $\to$ at 170 150
		\pinlabel $\to$ at 255 150
		\pinlabel $\to$ at 170 30
		\pinlabel $\nearrow$ at 275 70
		\endlabellist 
        \includegraphics[width=\linewidth]{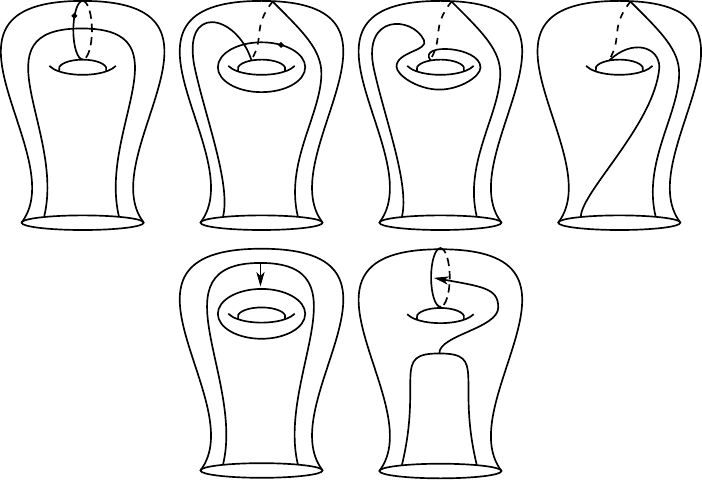}
        \caption{\ }
        \label{klk1}
    \end{subfigure}\hspace{.8cm}
    \begin{subfigure}{.45\linewidth}
	\centering
		\labellist
		\small\hair 2pt
		\pinlabel $\to$ at 83 150
		\pinlabel $\to$ at 170 150
		\pinlabel $\to$ at 255 150
		\pinlabel $\to$ at 170 30
		\pinlabel $\nearrow$ at 275 70
		\pinlabel $1$ at 28 218
		\pinlabel $2$ at 68 210
		\endlabellist
        \includegraphics[width=\linewidth]{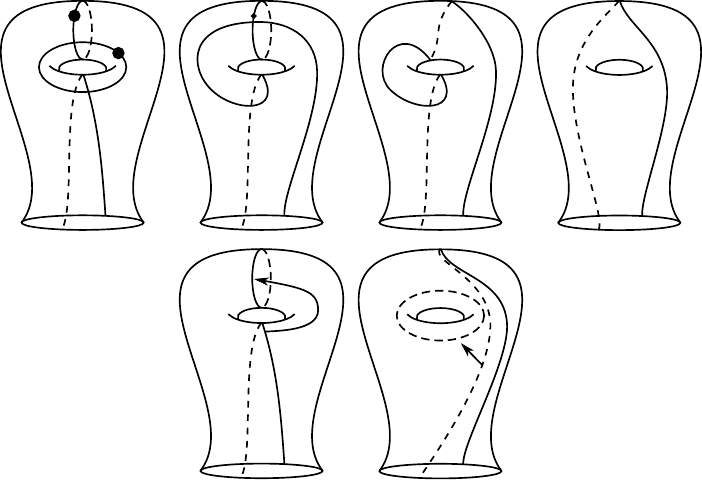}
        \caption{\ }
        \label{klk2}
    \end{subfigure}
    \caption{Realizing $(t_{\gamma_k}t_{\gamma_l}t_{\gamma_k})^{2m+1}$, part of the shift move, by slides over $\gamma_k$ and $\gamma_l$: (\ref{klk1}) is the case in which an arc of a nonstationary circle initially intersects $\gamma_k$; (\ref{klk2}) is the case in which it initially intersects $\gamma_l$.}
	\label{klk}
\end{figure}
\end{proof}
\begin{rmk}\label{slidermk1}Performing an arbitrary slide on a crown diagram does not generally produce a new crown diagram. However, slides do correspond to actual handleslides in a handle decomposition corresponding to the crown diagram, in which fiber-framed 2-handles are attached to $\partial(\Sigma\times D^2)$ along the circles $\gamma_n\times\{e^{2n\pi i/k}\}$ (it is not relevant to this remark, but to recover $M$, this 2-handlebody $\mathcal{H}$ is capped off with a copy of $\Sigma'\times D^2$, where $\Sigma'$ has genus one less than $\Sigma$). In this interpretation of a slide, a point of the $i^{th}$ attaching circle for $1<i<n$ follows the path $\pi$ in $\Sigma\times\{e^{2n\pi i/k}\}$, then follows a path along $\{\pi(1)\}\times S^1$ toward $\gamma_1$ or $\gamma_n$, at which point the handleslide between 2-handles occurs. Notice there are two paths along $\{\pi(1)\}\times S^1$ to choose from, and the right choice is the one that lies in the $S^1$ interval corresponding to the nonstationary set. With this understood, it becomes clear that any collection of slides coming from Theorem \ref{slidethm} is of the type that allows the affected attaching circles (that is, those corresponding to the nonstationary set) to travel back to their respective fibers $\Sigma\times\{e^{2n\pi i/k}\}$ via ambient isotopy in $\mathcal{H}$. All of this is perhaps best visualized starting with a link diagram of $\Gamma$ in $\Sigma$, where the nonstationary set is $\{\gamma_2,\ldots,\gamma_{n-1}\}$ and $\gamma_i$ crosses under $\gamma_j$ for all $i<j$. Starting with a crown diagram enriched with this crossing data, $M$ can be recovered from the result of any slide as interpreted above.\end{rmk}
	
\begin{rmk}\label{slidermk2}Interpreting slides as traditional 4-dimensional handleslides, it is possible to strengthen Theorem \ref{slidethm} to say that the reordering of $\Gamma$ in the shift move as in Equation \ref{shifteq} is also given by a sequence of slides: To switch the order of a consecutive pair of circles, say $(\gamma_1,\gamma_2)$, it is a simple exercise to arbitrarily orient $\gamma_1$ and $\gamma_2$, then negatively slide $\gamma_1$ over $\gamma_2$ resulting in the pair of curves 
\[(\gamma_1-\gamma_2,\gamma_2),\] 
then positively slide the second curve over the first, resulting in 
\[(\gamma_1-\gamma_2,\gamma_2+\gamma_1-\gamma_2)=(\gamma_1-\gamma_2,\gamma_1),\] 
then negatively slide the first over the second to obtain 
\[(\gamma_1-\gamma_2-\gamma_1,\gamma_1)=(-\gamma_2,\gamma_1).\]\end{rmk}
\begin{figure}
	\centering{
		\labellist
		\small\hair 2pt
		\pinlabel 0 at 100 0
		\pinlabel 1 at 100 15
		\pinlabel 1 at 100 30
		\pinlabel 2 at 100 45
		\pinlabel 2 at 100 60
		\pinlabel 3 at 100 75
		\endlabellist
		\includegraphics{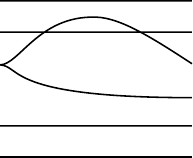}}
	\caption{A cerf graphic corresponding to a stabilization followed by handleslide in a Heegaard diagram.}\label{cerfdiag}
\end{figure}
It is known that there are pairs of Heegaard splittings of closed 3-manifolds that require stabilization before they become isotopic. In other words, there are pairs of Heegaard diagrams $D,D'$ whose diffeomorphism types can be connected by a finite sequence of handleslides and isotopies of individual $\alpha$ and $\beta$ circles only after some positive number of stabilizations, even when $D$ and $D'$ have the same genus. One result below, Theorem \ref{stabthm}, states this is not the case for crown diagrams. Here are a few observations that might make this less surprising. In \cite{HTT} and every other construction known to the author, two Heegaard diagrams for the same 3-manifold coming from non-isotopic splittings of the same genus are related by switching the roles of the $\alpha$ and $\beta$ circles (equivalently, replacing a compatible Morse function $f\co Y\to[0,1]$ with $1-f$). This is a modification that does not exist for crown diagrams. It also seems that the obstruction to performing $R_2$ deformations discussed in \cite[Example 2.8]{W2} could be exploited to prove a similar non-isotopy result, working with the Cerf graphic of a carefully chosen deformation of Morse functions: Here a stabilization or its inverse occurs at every cusp, and handleslides occur precisely as instances of the mentioned obstruction (see \cite[Section 3]{La} for discussion of Cerf graphics). This is another kind of behavior that would not apply to crown diagrams. Finally, in the proof for Theorem \ref{stabthm} below, a crucial step is to move a stabilization to occur before or after any genus-preserving move by a homotopy of the deformation; such a technique is not generally available to Cerf graphics (see for example Figure \ref{cerfdiag}, a Cerf graphic corresponding to the stabilization of a genus-one Heegaard diagram, introducing circles $\alpha$ and $\beta$, immediately followed by a handleslide involving $\alpha$ or $\beta$. Such a modification is impossible there simply because of the number of attaching circles: No handleslides can be performed without first performing a stabilization).

One analogue of a Cerf graphic for a family of crown diagrams appears in Figure~\ref{cbd}. That figure depicts the critical set of a deformation, that is, a copy of $S^1\times[0,1]$ smoothly properly embedded into $M\times[0,1]$. Any deformation with this decorated critical surface is a stabilization deformation followed by handleslide and destabilization deformations (see also \cite[Figure 4 ]{W2}). The dots are swallowtails, the heavier arcs are cusp arcs, and the lighter arcs are the preimage of fold crossings in $S^2\times[0,1]$ (Figure \ref{hstrade1} is a depiction of the same critical surface omitting the cusps). See \cite[Sections 3.1 and 3.5.1]{W2} for an introduction to such figures and results concerning movements of their decorations. The rest of this paper freely uses ideas and terminology from those sections.
\begin{figure}[h]
\begin{center}\includegraphics{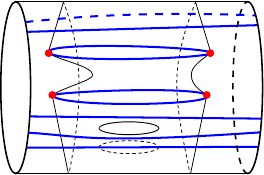}\end{center}\caption{The decorated critical surface $S^1\times[0,1]\subset M\times[0,1]$ for a stabilization deformation followed by handleslide and finally a destabilization at the same fold arc as the stabilization. Here, $t$ increases to the right.\label{cbd}}\end{figure}

We begin with a few results used in the proof of Theorem~\ref{stabthm}. The first implies that the location of a stabilization is irrelevant to the resulting crown diagram up to diffeomorphism.
\begin{prop}\label{stabmoveprop}Enumerate the fold arcs of an SPWF counter-clockwise according to $\Z_k$ as  $\varphi_1,\varphi_2,\ldots,\varphi_k$ and suppose $\alpha$ is a stabilization deformation whose initial flips occur at $\varphi_n$. Then $\alpha$ is homotopic, relative to its endpoints, to a stabilization whose initial flips occur on $\varphi_{n+1}$. \end{prop}
\begin{proof}In \cite[Figure~23]{W2} there appears a way to change the fold arc on which a flip occurs by a homotopy that fixes the endpoints of the given deformation. Use this to move the swallowtails into the desired region of fold points in $\crit\alpha$. The immersion arcs can be pushed in a genus-increasing manner so that the $R_2$ deformation that concludes the stabilization deformation occurs between the fold arc bounded by the two swallowtails and some other fold arc: This movement of the immersion locus merely corresponds to moving cusps into the higher-genus side of an intersecting fold arc at earlier values of $t$ than in the original deformation.\end{proof}

\begin{prop}\label{sdprop}If $\alpha$ is a stabilization deformation followed by a destabilization deformation, then $\alpha$ is homotopic to a constant deformation.\end{prop}
\begin{proof}Use Proposition~\ref{stabmoveprop} to arrange for the stabilization to occur at the same fold arc as the destabilization, and use \cite[Lemma 3.13]{W2} to isotope the two immersion arcs corresponding to the destabilization forward and backward in $t$ so that their $t$-minimal points (the pair of points at which the second Reidemeister-II fold crossing occurs) lie in the same fold arc as the $t$-maximal points coming from the destabilization. In this way, the Reidemeister-II fold crossings for the stabilization and destabilization occur between the same pair of fold arcs; Figure~\ref{sdpropfig} gives base diagrams for the resulting deformation.
\begin{figure}[b]
\begin{center}
\labellist
\small\hair 2pt
\pinlabel $\varphi$ at 96 70
\pinlabel $\overset{1}{\rightarrow}$ at 65 48
\pinlabel $\underset{6}{\leftarrow}$ at 65 38
\pinlabel $\overset{2}{\rightarrow}$ at 168 48
\pinlabel $\underset{5}{\leftarrow}$ at 168 38
\pinlabel $\overset{3}{\rightarrow}$ at 255 48
\pinlabel $\underset{4}{\leftarrow}$ at 255 38
\pinlabel $\bullet\ p$ at 320 5
\pinlabel $q\ \bullet$ at 191 65
\endlabellist
\includegraphics{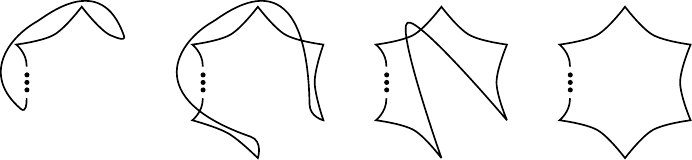}\end{center}\caption{Base diagrams for a stabilization deformation followed by a destabilization deformation. Steps 1 and 6 involve flips, the rest are 2-parameter crossings between the fold arc $\varphi$ and others.\label{sdpropfig}}\end{figure}

As arranged in the previous paragraph, near the end of the stabilization deformation (at step 3), a fold arc $\varphi$ cancels its two intersections with another fold arc in a genus-increasing $R_2$ deformation. In step 4, $\varphi$ re-acquires its intersections at the beginning of the destabilization deformation. During steps 3 and 4, a reference fiber chosen above $p$ with reference arcs crossing $\varphi$ gives a surgered crown diagram that is unchanged during those steps (see \cite[Remark 1.7]{W2} or \cite[Section 6.1]{BH} about surgered crown diagrams). For this reason, the interval in $t$ between steps 3 and 4 can be eliminated by homotopy of $\alpha$, resulting in a new deformation with steps 1-2-5-6. Going further, steps 2 and 5 can similarly be eliminated, using the same argument with one additional reference path from the same reference fiber that goes around the back of the sphere to the point $q$. What remains is a pair of flipping moves, followed by a pair of inverse flipping moves, which can also be eliminated by homotopy of $\alpha$. To explain this last step, choose a smooth path $\gamma$ in the fold locus of $\alpha$ connecting two of the swallowtail points such that $\gamma(t)\in\crit\alpha_t$. For example, $\alpha(\gamma)$ could coincide with a crossing introduced by one flip at the beginning of $\alpha(\gamma)$, (this crossing is then eliminated by the inverse flip at the end of $\alpha(\gamma)$). According to the local model of a swallowtail as a deformation of maps $B^4\to B^2$, the 2-cusped loop introduced at the beginning of $\gamma$ bounds a one-parameter family of disks in $S^2$, each of which is the image of a 4-ball neighborhood of a point of $\gamma$. These 4-balls trace out a neighborhood $U$ of $\gamma$ such that the restriction $\alpha|_U$ is simply a flip followed by its inverse. Such a deformation, called a \emph{detour} in this paper and in \cite[Definition 2.10]{W2}, is clearly a nullhomotopic loop in the space of maps $M\to S^2$.\end{proof}

\begin{prop}\label{dsprop}If $\alpha$ is a destabilization deformation followed by a stabilization deformation then $\alpha$ is homotopic to a sequence of handleslide deformations.\end{prop} 
\begin{proof}Apply Proposition~\ref{stabmoveprop} to arrange for the stabilization to occur at the same fold arc as the destabilization. Base diagrams for the result start at the right of Figure \ref{sdpropfig}, then proceed 4-5-6-1-2-3. Now apply the modification from \cite[Figure 21]{W2} to cancel the four swallowtails. The result is what is called a \emph{genus-decreasing immersion pair} in \cite[Definition 3.11]{W2}. Use \cite[Lemma 3.16(b)]{W2} to break this pair into many smaller genus-decreasing pairs, each of which is disjoint from the cusp locus. Because the supports of each pair of resulting $R_2$ deformations are disjoint (they occur in the preimages of distinct regions in $S^2$), these $R_2$ deformations can be performed in sequence, yielding a sequence of handleslides.\end{proof}

\begin{thm}\label{stabthm}Suppose two crown diagrams $(\Sigma,\Gamma)$ and $(\Sigma,\Gamma')$ are related by one of the following sequences of moves.\begin{enumerate}\item\hypertarget{1} Stabilize, then perform genus-preserving moves, then destabilize.\item\hypertarget{2} Destabilize, then perform genus-preserving moves, then stabilize.\end{enumerate}
Then there is a sequence of genus-preserving moves relating $(\Sigma,\Gamma)$ and $(\Sigma,\Gamma')$.\end{thm}
\begin{proof}For both cases the argument is to prove it is possible to switch the order in which the model deformations for (de)stabilizations and genus-preserving moves appear  in $\alpha$ without changing the fibration maps $\alpha_0$ and $\alpha_1$, then to apply Propositions~\ref{sdprop} and \ref{dsprop}.

Case \hyperlink{1}{1}. First, the argument for handleslide. As remarked in the proof of Theorem~\ref{slidethm} and \cite{H,BH,W2}, these moves (using vanishing cycles $\gamma_1$ and $\gamma_n$) affect a \emph{nonstationary set} of vanishing cycles $\{\gamma_2,\ldots,\gamma_{n-1}\}$ by a lift to $MCG(\Sigma)$ of a pushing map in $MCG(\Sigma')$, where $\Sigma'$ is the result of capping off $\Sigma\setminus(\gamma_1\cup\gamma_n)$ with two disks (one disk for multislide and shift). The pushing maps come from a choice of properly embedded arc (call it $\tilde{c}$) in the surface $\Sigma\setminus(\gamma_1\cup\gamma_n)$. The move affects an element of the nonstationary set by sliding over $\gamma_1$ or $\gamma_n$ (or both) along paths that are parallel to $\tilde{c}$, and a nonstationary vanishing cycle is modified by the move exactly when its image in $\Sigma\setminus(\gamma_1\cup\gamma_n)$ intersects $\gamma_1\cup\gamma_n\cup\tilde{c}$. For this reason, copies of the image of $\tilde{c}$ will appear in any nonstationary vanishing cycle that is modified by the move. Since the crown diagram destabilizes after the genus-preserving 
move, there is a fold arc $\varphi$ that sweeps across all of the higher-genus region of $S^2$ in a genus-decreasing $R_2$ deformation. Because of this, for some chosen reference fiber in the higher genus side of the fibration just before destabilizing, Theorem \ref{isotthm} implies the vanishing cycle $\gamma_i$ of this fold arc is disjoint from all the other vanishing cycles except $\gamma_{i\pm1}$ up to isotopy. Use Proposition~\ref{stabmoveprop} to cause $\varphi$ to also be the fold arc that sweeps across the back of $S^2$ in the stabilization deformation. Now $\gamma_i$ is disjoint from the image of $\tilde{c}$ and all other vanishing cycles except $\gamma_{i\pm1}$ before and after the handleslide. For this reason, the supports of the stabilization deformations and the handleslide deformation are disjoint and the deformations' effects on $\Gamma$ commute: the handleslide can be moved into the lower-genus side of either of the stabilization deformations. 

The multislide deformation similarly corresponds to a choice of loop $\tilde{c}$ based at a point in the circle $\partial(\Sigma\setminus(\gamma_1\cup\gamma_n))$, along which nonstationary vanishing cycles are slid over $\gamma_1$ and $\gamma_n$, in addition to the half-twists about $\partial(\Sigma\setminus(\gamma_1\cup\gamma_n))$ considered in Figure \ref{delta}. As before, use Proposition~\ref{stabmoveprop} to cause both the stabilization and destabilization to occur on a fold arc such that the image of its vanishing cycle in $\Sigma\setminus(\gamma_1\cup\gamma_n)$ is disjoint from the circles $\tilde{c}$ and $\partial(\Sigma\setminus(\gamma_1\cup\gamma_n))$, allowing the multislide deformation to move into the lower-genus side of, say, the stabilization deformation.

As for shift, essentially the same argument applies, except that there is no choice of curve in $\Sigma\setminus(\gamma_1\cup\gamma_n)$ -- the only requirement is for $\varphi$ to not be one of the fold arcs involved in either merge move. Once this is achieved, apply Proposition~\ref{sdprop} to cancel the pair of stabilization deformations. This concludes Case \hyperlink{1}{1}.

Case \hyperlink{2}{2}. This is easier than the previous case, because it is a genus-increasing modification of the deformation in the sense that the genus-preserving moves are all transferred into higher-genus diagrams. There is initially a vanishing cycle $\gamma_i$, disjoint from all others except $\gamma_{i\pm1}$, whose corresponding fold sweeps across $S^2$ in a genus-decreasing direction for the destabilization. After the destabilization deformation (following \cite[Theorem 4.1]{H}, in which $\gamma_i$ is called $d$), $\gamma_i$ survives as the endpoints of a short arc (which we will call $a$, but Hayano calls $\alpha$) which is transverse to the vanishing cycle on which the inverse flipping moves occurred: Choosing a horizontal distribution for the fibration structure over $[0,1]\times S^2$ given by $\alpha$, this fiberwise pair of points is precisely the set of points that runs into $\varphi$ in the direction $-\partial/\partial t$. Generically, the support of the genus-preserving moves is disjoint from $a$, so the deformation for the genus-preserving move may be moved backward in $t$ to occur before the destabilization deformation. Once this is achieved, apply Proposition~\ref{dsprop} to the remaining destabilization-stabilization deformation.\end{proof}

\begin{rmk}Theorem \ref{stabthm} along with Remarks \ref{slidermk1} and \ref{slidermk2} demonstrate a significant reduction in the complexity of the problem of defining general smooth 4-manifold invariants, from considering the incredible variety of Kirby diagrams to that of two well-studied moves -- slides and isotopies of $\Gamma$ in an enhanced crown diagram of a fixed genus and homotopy class.\end{rmk}

The next two results of the paper eliminate handleslides in favor of stabilizations and consolidate the other two genus-preserving moves.\begin{prop}\label{hstradeprop}Any handleslide deformation $\alpha$ is homotopic to a stabilization deformation followed by an inverse stabilization deformation, so that handleslides may be omitted from the list in Theorem~\ref{p2thm}.\end{prop}\begin{proof}Using language from \cite{W2}, the handleslide deformation is defined to be a genus-decreasing $R_2$ deformation that introduces two intersection points in the critical image, immediately followed by another $R_2$ deformation that cancels the intersections. The first step in the proposed homotopy of $\alpha$ is to smoothly insert a deformation into $\alpha$ at a $t$-value $t_0$ immediately before the initial $R_2$ deformation occurs: A stabilization deformation, immediately followed by its reverse. The inserted deformation, called a \emph{detour} in \cite[Definition 2.10]{W2}, forms a loop in the space of maps $M\to S^2$ (based at $\alpha_{t_0}$) which is clearly nullhomotopic, so the proposed modification can be achieved by a homotopy of $\alpha$. As in Case 2 of Theorem~\ref{stabthm}, move the handleslide deformation into the higher-genus region between the two stabilizations, resulting in Figure~\ref{hstrade1}.
\begin{figure}
	\centering
	\begin{subfigure}{.2\linewidth}
		\centering
		\includegraphics[width=\linewidth]{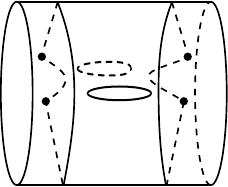}
		\caption{\ }
		\label{hstrade1}
	\end{subfigure}\hspace{0.8cm}
	\begin{subfigure}{.2\linewidth}
		\centering
		\includegraphics[width=\linewidth]{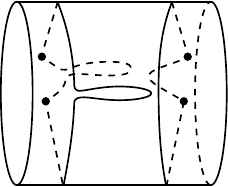}
		\caption{\ }
		\label{hstrade2}
	\end{subfigure}\hspace{0.8cm}	\begin{subfigure}{.2\linewidth}
		\centering
		\includegraphics[width=\linewidth]{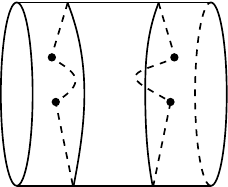}
		\caption{\ }
		\label{hstrade3}
	\end{subfigure}
	\caption{Including a handleslide deformation into a stabilize-destabilize pair.}\label{hstrade}
\end{figure}
The first stabilization deformation ends with an $R_2$ deformation canceling intersections between two fold arcs, and the handleslide deformation begins with an $R_2$ deformation that creates two intersections between a pair of fold arcs. It is possible to choose the first stabilization above so that these pairs of fold arcs coincide. With this understood, consider the image of these fold arcs in $S^2$ at the end of the stabilization and the beginning of the handleslide: It is the contraction and immediate re-formation of a lune of regular values bounded by those two fold arcs. The next step is a homotopy of $\alpha$ in which this lune disappears for decreasing $t$-increments until the two fold arcs do not separate at all (at least, until the $R_2$ deformation coming from the end of the handleslide deformation). This results in Figure~\ref{hstrade2}, which becomes clearly a stabilization-destabilization pair after possibly applying \cite[Proposition 3.10]{W2}, yielding Figure~\ref{hstrade3}.\end{proof}

In \cite[Section 4.1.2]{BH}, there appeared a \emph{generalized shift deformation}. Proposition~4.5 of the same paper gave its effect on a crown diagram. The following proposition can also be interpreted as a converse of \cite[Remark~2.9]{W2}.
\begin{prop}\label{genshift}The shift and multislide deformations in Theorem~\ref{p2thm} are homotopic to sequences of generalized shift deformations or length 1 and 2, respectively.\end{prop}
\begin{proof}Since shift deformations are a special case of generalized shift deformations, it remains to show that any multislide deformation can be decomposed into a pair of generalized shift deformations. This is easy to arrange: At a point $t_0$ just after the initial fold merge, which produces two cusps $c_1$ and $c_2$, perform an arbitrary cusp merge between $c_1$ and a cusp which is adjacent to $c_2$, then immediately reverse this cusp merge, and continue on with the deformation as it proceeded originally. The first and second pair of merge moves now each form a generalized shift deformation. The inserted deformation is a detour, so the proposed modification can be achieved by a homotopy of $\alpha$.\end{proof}

In \cite{H}, Hayano describes how a stabilization deformation affects a crown diagram. The description explicitly makes use of the fold arc on which the initial flips occur, though according to the following proposition, one can change this choice of fold arc without changing the diffeomorphism class of the resulting diagram.

The last result gives a simple criterion for when a crown diagram destabilizes.
\begin{prop}The crown diagram $(\Sigma,\Gamma)$ destabilizes if and only if $\Gamma$ has an element $\gamma_i$ which is disjoint from all other elements except $\gamma_{i\pm1}$.
\end{prop}
\begin{proof}In this case, \cite[Propositions 2.5(4) and 2.7]{W2} imply the existence of a deformation corresponding to steps 4 and 5 of Figure~\ref{sdpropfig}. The vanishing cycles measured from a reference fiber on the higher-genus sides of a pair intersecting folds are disjoint, and the vanishing cycles on either side of a cusp must intersect transversely at a single point so the vanishing cycles for the three fold arcs in each of the two triangles after step 5 (or, indeed, any such triangle) appear as in \cite[Figure 5]{W1} and so are suitable for inverse flipping moves that complete the destabilization.
\end{proof}

\end{document}